\newtheorem{thm}{Theorem}[section]
\newtheorem{lemma}[thm]{Lemma}
\newtheorem{cor}[thm]{Corollary}
\newtheorem{prop}[thm]{Proposition}
\newtheorem{defi}[thm]{Definition}
\theoremstyle{definition}
\newtheorem{rem}[thm]{Remark}
\def\CC{{\mathcal{C}}}
\def\CD{{\mathcal{D}}}
\def\CF{{\mathcal{F}}}
\def\CI{{\mathcal{I}}}
\def\CU{{\mathcal{U}}}
\def\CY{{\mathcal{Y}}}
\def\Fu{{\mathfrak{u}}}
\def\bZ{{\mathbb Z}}
\def\res{\operatorname{res}\nolimits}
\def\Rad{\operatorname{Rad}\nolimits}
\def\modg{\operatorname{{\bf mod}(\text{$kG$})}\nolimits}
\def\stmodg{\operatorname{{\bf stmod}(\text{$kG$})}\nolimits}
\def\stmod{\operatorname{{\bf stmod}}\nolimits}
\def\Stmodg{\operatorname{{\bf StMod}(\text{$kG$})}\nolimits}
\def\HHH{\operatorname{H}\nolimits}
\def\Hom{\operatorname{Hom}\nolimits}
\def\HH#1#2#3{\HHH^{#1}(#2,#3)}
\def\homb{\operatorname{\underline{Hom}}\nolimits}
\def\Ext{\operatorname{Ext}\nolimits}
\def\hgs{\HH{*}{G}{k}}
\def\bfp{{\mathbb F}_p}
\title{Blocks and support varieties}
\author{Jon F. Carlson and Jeremy Rickard}
\begin{document}
\maketitle

\section{Introduction}

Block decompositions of module categories are well-known and much
studied, especially in modular representation theory. When considering
cohomological questions, it is often more convenient to work in the
stable module category, but this makes little difference to the block
theory: one simply loses the simple blocks for which all modules are
projective. The theory of varieties for modules for finite groups
gives a rich supply of interesting thick subcategories of the stable
module category. There are block decompositions of these arising from
the usual blocks of the group algebra, but it turns out that in
general the blocks break up even further. In this paper we study this
phenomenon, particularly in the case of the thick subcategory
determined by a single line in the maximal ideal spectrum $V_G(k)$ of
the cohomology algebra. We give simple examples where calculations can
easily be done, and use a theorem of Benson to reduce the general case
to examples of this kind.

Finally, we describe how the theory of block varieties developed by
Linckelmann seems to shed more light on these phenomena.

\section{Block decompositions of categories}

If $\CC$ is an additive category, then by an \textbf{additive
  subcategory} of $\CC$ we mean a full subcategory $\CC'$ such that if
$X$ is a finite coproduct, in $\CC$, of objects of $\CC'$, then $X$ is
in $\CC'$. In particular, $\CC'$ is also an additive category, and
contains all objects of $\CC$ isomorphic to objects of $\CC'$.

Let $\CC$ be an additive category. We say that $\CC$ is the
\textbf{direct sum} of a family $\{\CC_i:i\in I\}$ of additive
subcategories of $\CC$, and write
$$\CC=\bigoplus_{i\in I}\CC_i$$
if every object $X$ of $\CC$ can be expressed as a coproduct
$$X=\bigoplus_{i\in I}X_i$$
with $X_i$ an object of $\CC_i$, and
$$\Hom(X,Y)=0=\Hom(Y,X)$$
whenever $X$ is an object of $\CC_i$ and $Y$ is a coproduct of objects
of $\{\CC_j:j\neq i\}$.

It follows that if $X=\bigoplus X_i$ and $Y=\bigoplus Y_i$, with
$X_i,Y_i\in\CC_i$, then
$$\Hom(X,Y)=\prod_{i\in I}\Hom(X_i,Y_i).$$

It is easy to see that the projection $X\mapsto X_j$ from $\CC$
to $\CC_i$ is functorial for each $i\in I$, and is both left and right
adjoint to the inclusion functor $\CC_j\rightarrow\CC$. It follows
that the direct sum decomposition of $X$ is unique up to natural
isomorphism, and that the subcategories $\CC_i$ are closed under all
limits and colimits in $\CC$. In particular, they are closed under
arbitrary products and coproducts and under taking direct summands.

If $\CC$ is an abelian category then $\CC_i$ is abelian, closed under
taking extensions, subobjects and quotients, and the inclusion and
projection functors are exact.

If $\CC$ is a triangulated category, then if $\CC_i$ is closed under
the shift functor, it is triangulated, with the inclusion and
projection functors exact. In this paper we shall mostly be
considering the case where $\CC$ is a triangulated subcategory of the
stable module category $\stmod(kG)$ of a finite group algebra, and in
this case it follows automatically that each $\CC_i$ is closed under
the shift functor, by Tate duality.
$$\homb(M,N)\cong(\homb(N,\Omega(M))^*$$
for finitely generated modules $M$ and $N$, so in particular
$$\homb(M,\Omega(M))\neq0\neq\homb(\Omega^{-1}(M),M),$$
and hence if $M$ is an indecomposable object of $\CC_i$ then
$\Omega(M)$ and $\Omega^{-1}(M)$ must also be in $\CC_i$.

By a \textbf{block decomposition} of an additive category $\CC$, we
mean a direct sum decomposition $\CC=\bigoplus_{i\in I}\CC_i$ such
that the subcategories $\CC_i$ are nonzero and do not themselves have
nontrivial direct sum decompositions. We call the subcategories
$\CC_i$ \textbf{blocks} of $\CC$.

If $\CC$ has a block decomposition $\CC=\bigoplus_{i\in I}\CC_i$ and
another direct sum decomposition $\CC=\bigoplus_{i'\in I'}\CC_{i'}$
into non-zero direct summands, then for each $i\in I$ the projection
of $\CC_i$ onto $\CC_{i'}$ is nonzero for a unique $i'\in I'$, or else
$\CC_i$ would have a nontrivial direct sum decomposition. Hence each
$\CC_{i'}$ is a direct sum of blocks, and in particular the block
decomposition of $\CC$ is unique.

For example, if $kG$ is a finite group algebra then it is easy to
check that the module category $\modg$ has a block decomposition into
the module categories of the blocks, in the usual sense, of the group
algebra. Also the stable module category $\stmodg$ has a block
decomposition into the stable module categories of the nonsimple
blocks of $kG$. We give the easy proof of this later in this section.

We shall now describe the block decomposition of an arbitrary thick
subcategory of a stable module category.

\begin{defi}
  Let $\CC$ be a thick subcategory of the stable module category
  $\stmodg$ of a finite group, and let $\CI$ be the class of
  indecomposable objects of $\CC$. Define $\sim$ to be the smallest
  equivalence relation on $\CI$ such that $M\sim N$ whenever
  $\homb(M,N)\neq 0$.
\end{defi} 

Thus, for $M,N\in\CI$, $M\sim N$ if and only if there exist objects
$M=L_0,\dots,L_n=N$ of $\CI$ such that for every $i = 1,\dots,n$,
either $\homb(L_{i-1},L_i)\neq0$ or $\homb(L_i,L_{i-1})\neq0$. By the
remark on Tate duality above, it follows that $M\sim\Omega(M)$ for
every $M\in\CI$, and hence $M\sim N$ if
$$0\neq\widehat{\Ext}_{kG}^i(M,N)\cong\homb(\Omega^i(M),N)$$
for any $i\in\bZ$.

If $I$ is the set of equivalence classes, and we define $\CC_i$ to be
the full subcategory of $\CC$ consisting of direct sums of objects of
$i\in I$, then it is easy to see that $\CC=\bigoplus_{i\in I}\CC_i$ is
a block decomposition of $\CC$. We shall call the blocks $\CC_i$
\textbf{ext-blocks} of $\CC$ to distinguish them from the blocks of
the group algebra. In fact, the main aim of this paper is to study the
relationship between the two notions of block.

We shall be studying in detail the ext-blocks of subcategories of
$\stmodg$ determined by varieties. If $V$ is a closed homogeneous
subvariety of the maximal ideal spectrum $V_G(k)$ of $\hgs$, then we
denote by $\CC_V$ the full subcategory of $\stmodg$ consisting of the
modules $M$ whose variety $V_G(M)$ is contained in $V$. This is a
thick subcategory of $\stmodg$. We denote by $\sim_V$ the equivalence
relation described above on the class of indecomposable objects of
$\CC_V$.

\begin{prop} \label{extblock-properties} Suppose that $V$ is a closed
  homogeneous subvariety of $V_G(k)$. Then we have the following.
\begin{enumerate}
\item The number of ext-blocks of $\CC_V$ is finite.
\item If $\CC_V =\oplus_{i\in I}\CC_i$ is a direct sum decomposition,
  then any ext-block of $\CC_V$ is contained in some $\CC_i$.
\item If $M$ and $N$ are nonprojective indecomposable modules in the
  same ext-block of $\CC_V$, then $M$ and $N$ are in the same ordinary
  block of $kG$.
\item If $M$ is a nonprojective module in $\CC_V$ and if $M$ lies in a
  block $B$ of $kG$ with defect group $D$, then $V_G(M) \cap
  \res_{G,D}^*(V_D(k)) \neq \{0\}$.
\end{enumerate}
\end{prop}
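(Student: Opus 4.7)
The plan is to handle the four parts in the order (3), (2), (4), (1), with (1) being the substantive one. For (3), the ordinary block decomposition $\stmodg = \bigoplus_B \stmod(B)$ (over non-simple blocks $B$) forces $\homb$ to vanish between distinct ordinary blocks, so no chain of $\homb$-links defining $\sim_V$ can cross blocks; each ext-block lies in a single $\stmod(B)$. Part (2) is the identical argument applied to a direct sum decomposition $\CC_V = \bigoplus_i \CC_i$: vanishing of $\homb$ between distinct $\CC_i$'s confines every $\sim_V$-chain to a single $\CC_i$, so each ext-block is contained in some $\CC_i$.

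For (4), I would use that any indecomposable summand of $M \in B$ has vertex contained in some $G$-conjugate $gDg^{-1}$ of $D$, and that the image $\res_{G,gDg^{-1}}^*(V_{gDg^{-1}}(k))$ coincides with $\res_{G,D}^*(V_D(k))$ in $V_G(k)$ by conjugation invariance of the restriction map. This yields the inclusion $V_G(M) \subseteq \res_{G,D}^*(V_D(k))$. Nonprojectivity of $M$ forces $V_G(M) \neq \{0\}$, so the intersection in question equals $V_G(M)$ and is nontrivial.

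The substantive part is (1). I would pick homogeneous elements $\zeta_1, \dots, \zeta_r \in \hgs$ whose common zero locus is $V$ and set $L = L_{\zeta_1} \otimes \cdots \otimes L_{\zeta_r}$, a Carlson-type module with $V_G(L) = V$. For any nonprojective indecomposable $M \in \CC_V$, using $\widehat{\Ext}_{kG}^*(L,M) \cong \widehat{H}^*(G, L^*\otimes M)$ together with the Benson--Carlson--Rickard tensor product theorem, the support variety of the Tate cohomology satisfies
\[
V_G\bigl(\widehat{\Ext}_{kG}^*(L,M)\bigr) = V_G(L)\cap V_G(M) = V_G(M) \neq \{0\},
\]
so $\widehat{\Ext}_{kG}^n(L, M) = \homb(\Omega^n L, M) \neq 0$ for some $n \in \bZ$. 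Writing $L = L_1 \oplus \cdots \oplus L_s$ as a sum of non-projective indecomposables (discarding projective summands), this forces $\homb(\Omega^n L_j, M) \neq 0$ for some $j$; combining with the Tate-duality relation $\Omega^n L_j \sim_V L_j$ highlighted in the excerpt, we conclude $L_j \sim_V M$. Hence every indecomposable of $\CC_V$ lies in the ext-block of some $L_j$, and the number of ext-blocks is at most $s$.

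The principal obstacle, and the step where genuine input from support-variety theory is needed, is the application of the tensor product theorem in this form to identify $V_G(\widehat{\Ext}_{kG}^*(L,M))$ and thereby extract a nonzero Tate-Ext group from the sole hypothesis that $V_G(M) \subseteq V_G(L)$ is nonzero. Once that identification is in hand, the reduction to a finite generating set of modules --- and hence a finite list of candidate ext-blocks --- is routine bookkeeping using the Tate-duality observation already recorded in the text.
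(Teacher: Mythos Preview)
Your arguments for (2), (3), and (4) are essentially the paper's own. For (1) you use the same core device as the paper---choose $L$ with $V_G(L)=V$ and invoke the tensor product theorem---but your bookkeeping is a bit more direct: the paper observes that $M\otimes L$ is nonprojective, so $\homb(M\otimes L,S)\neq0$ for some simple $S$, whence $M$ is $\sim_V$-linked to an indecomposable summand of $L^*\otimes S$; the finite test set is thus $\bigcup_S\{\text{summands of }L^*\otimes S\}$. You instead use nonprojectivity of $L^*\otimes M$ to get $\widehat{\Ext}^n_{kG}(L,M)\neq0$ directly, linking $M$ to a summand of $L$ itself. Both routes are correct; yours avoids the detour through simple modules at the cost of invoking Tate cohomology, while the paper's version stays entirely in degree zero stable $\Hom$.
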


\begin{proof}
  Let $L$ be a finitely generated $kG$-module with the property that
  $V_G(L)=V$. The fact that such a module exists is a standard
  property of support varieties for finite groups as in
  \cite{car-connected}.  Suppose that $M$ is a nonprojective
  indecomposable module in $\CC_V$. Then there exists an irreducible
  $kG$-module $S$ such that $\homb(M \otimes L, S) \neq0$. We
  know this from the tensor product theorem for support varieties
  which tells us that $M \otimes L$ is not projective since the
  varieties of $M$ and $L$ do not intersect trivially. Then we have
  that $\homb(M, L^* \otimes S)\neq0$ and hence there is some
  indecomposable component $U$ of $L^* \otimes S$ such that $M\sim_V
  U$.  We know that there are only a finite number of simple modules
  $S$ and only a finite number of components of $L^* \otimes S$ for
  any $S$.  Consequently, there are only a finite number of
  equivalence classes for the relation $\sim_V$, and hence only a
  finite number of ext-blocks.

  Statement $(2)$ repeats a general property of blocks proved above,
  and $(3)$ follows, since there is clearly a direct sum decomposition
  of $\CC_V$ according to the ordinary blocks of $kG$. 

  To prove statement $(4)$, we just need to recall that every module
  $N$ in $B$ is a direct summand of a module that is induced from a
  $kD$-module.  Therefore, $V_G(N)$ is contained in $\res_{G,D}^*(V_D(k))$.
\end{proof} 

\begin{rem}
  We should point out that the statement $(1)$ of the proposition is
  in contrast to the fact, shown in \cite{BCR3}, that the category
  $\CC_V$ may have an infinite number of mutually orthogonal thick
  subcategories. By this we mean that there may be an infinite number
  of thick subcategories such that if $M$ is a module in one and $N$
  is in another then $\widehat{\Ext}_{kG}^*(M,N)=0$.  However, $\CC_V$
  is not the direct sum of these subcategories, as they do not contain
  all indecomposable objects of $\CC_V$. Indeed, in the examples
  considered in \cite{BCR3} it can be shown that $\CC_V$ has only one
  ext-block.
\end{rem}

Notice that Proposition \ref{extblock-properties}(3) says that the
ext-blocks are a refinement of the ordinary blocks of $kG$. This
refinement can be seen another way.

\begin{prop}
  Suppose that $V$ and $V'$ are closed homogeneous subvarieties of
  $V_G(k)$ with $V \subseteq V'$, and let $M$ and $N$ be
  indecomposable modules in $\CC_V$.  Then if $M\sim_V N$, then also $M
  \sim_{V'} N$. Moreover, if $V = V_G(k)$, then the ext-blocks of
  $\CC_V=\stmodg$ are precisely the blocks of $kG$ which have defect
  greater than zero.
\end{prop}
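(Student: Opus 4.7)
The first assertion is a direct check from the definition of $\sim_V$. The inclusion $V\subseteq V'$ yields the inclusion $\CC_V\subseteq\CC_{V'}$ of full subcategories, so every indecomposable of $\CC_V$ is also an indecomposable of $\CC_{V'}$, and any pair $(M,N)$ of such indecomposables with $\homb(M,N)\neq 0$ is a generating pair for both $\sim_V$ and $\sim_{V'}$. Thus the restriction of $\sim_{V'}$ to indecomposables of $\CC_V$ satisfies the defining generating property of $\sim_V$; by minimality of $\sim_V$, this forces $\sim_V$ to refine $\sim_{V'}$ and gives the first statement.

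For the second statement, specialising to $V=V_G(k)$ and $\CC_V=\stmodg$, the plan is to combine the block decomposition $\stmodg=\bigoplus_B\stmod(B)$ (summed over blocks $B$ of $kG$ of positive defect) with Proposition~\ref{extblock-properties}(2) to conclude that every ext-block of $\stmodg$ is contained in some $\stmod(B)$ with $B$ of positive defect. It then suffices to show that for each such $B$ the category $\stmod(B)$ admits no nontrivial $\sim_V$-decomposition, equivalently that all nonprojective indecomposables in $B$ are $\sim_V$-equivalent. I would split this into (a) pairwise $\sim_V$-equivalence of the simple modules in $B$, and (b) $\sim_V$-equivalence of every nonprojective indecomposable in $B$ with some simple in $B$.

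Step (a) will use the classical characterisation of blocks by linkage of simples: any two simples $S,T$ in $B$ fit into a chain $S=S_0,\dots,S_n=T$ of simples in $B$ with $\Ext^1_{kG}(S_i,S_{i+1})\neq 0$ or $\Ext^1_{kG}(S_{i+1},S_i)\neq 0$ at each step. Positive defect of $B$ guarantees that every such $S_i$ is nonprojective, and the identification $\Ext^1_{kG}(S_i,S_{i+1})\cong\homb(\Omega(S_i),S_{i+1})$ combined with the Tate-duality relation $S_i\sim_V\Omega(S_i)$ recorded earlier converts the chain into a $\sim_V$-chain from $S$ to $T$.

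Step (b) is the main obstacle, and I would handle it by an orthogonality argument on Tate cohomology. Suppose, for contradiction, that $M$ is a nonprojective indecomposable in $B$ with $\widehat{\Ext}^i_{kG}(S,M)=0$ for every simple $S$ in $B$ and every $i\in\bZ$. An induction on composition length, applied via the long exact sequence of Tate-Ext to short exact sequences $0\to N'\to N\to S\to 0$ in $B$ peeling off a simple top, propagates this vanishing to $\widehat{\Ext}^*_{kG}(N,M)=0$ for every finitely generated $kG$-module $N$ lying in $B$. Specialising to $N=M$ yields $\End_{\stmodg}(M)=\widehat{\Ext}^0_{kG}(M,M)=0$, contradicting nonprojectivity of $M$. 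Hence there exist a simple $S$ in $B$ and some $i\in\bZ$ with $\homb(\Omega^i(S),M)\neq 0$, giving $S\sim_V\Omega^i(S)\sim_V M$; combined with (a) this shows all nonprojective indecomposables in $\stmod(B)$ form a single $\sim_V$-class, as required.
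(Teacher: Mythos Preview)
Your proof is correct and follows the same outline as the paper: the first assertion is immediate from the definition, and for the second you reduce, exactly as the paper does, to the $\Ext^1$-linkage of the simple modules in a block of positive defect. The only difference is that the paper handles your Step~(b) with a bare ``clearly $M\sim_V S$'' (presumably intending the direct observation that a nonprojective $M$ has $\Ext^1_{kG}(S,M)\neq0$ for some simple $S$ in its block), whereas your Tate-cohomology contradiction via induction on composition length is a valid but slightly more elaborate route to the same conclusion.
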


\begin{proof}
  The first statement is obvious from the definition. The second is a
  well known fact about blocks. That is, if $V = V_G(k)$ and if $M$
  and $N$ are nonprojective modules in the same block of $kG$, then
  clearly $M \sim_V S$ and $N \sim_V S'$ for some nonprojective simple
  modules $S$ and $S'$ in the block. But then there are simple modules
  $S=S_0,\dots,S_n=S'$ in the block with $\Ext^1(S_i,S_j)\neq0$, so
  $S\sim_V S'$.
\end{proof}

We end this section with some remarks on how things change if we
consider the stable category $\Stmodg$ of arbitrary (not necessarily
finitely generated) modules. For many of the most familiar
subcategories, it turns out that the ext-block structure is the same
as in the finitely generated case.

To make this precise, for a thick subcategory $\CC$ of $\stmodg$, let
$\CC^{\oplus}$ be the localizing subcategory of $\Stmodg$ generated by
$\CC$; i.e., the smallest triangulated subcategory of $\Stmodg$
that contains $\CC$ and is closed under arbitrary coproducts.

\begin{prop}
  Suppose $\CC$ is a thick subcategory of $\stmodg$ that has a block
  decomposition $\CC=\bigoplus_{i\in I}\CC_i$. Then
  $\CC^{\oplus}=\bigoplus_{i\in I}\CC_i^{\oplus}$ is a block
  decomposition. In particular, there is a natural bijection between
  the blocks of $\CC$ and the blocks of $\CC^{\oplus}$.
\end{prop}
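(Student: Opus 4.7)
The plan is to verify in turn that $\CC^\oplus = \bigoplus_{i \in I} \CC_i^\oplus$ is a direct sum decomposition and that each $\CC_i^\oplus$ is indecomposable; the natural bijection between blocks then follows from uniqueness of block decompositions. The pivotal first step is Hom-orthogonality between $\CC_i^\oplus$ and $\CC_j^\oplus$ for $i \neq j$. I would fix $M \in \CC_i$; as a finitely generated module $M$ is compact in $\Stmodg$, so the full subcategory of those $N$ with $\homb(\Omega^n M, N) = 0$ for all $n \in \bZ$ is a localizing subcategory (closure under arbitrary coproducts uses compactness of $M$). By the given block decomposition of $\CC$ it contains $\CC_j$ for every $j \neq i$, and hence contains the entire localizing subcategory those $\CC_j$ generate. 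A dual argument, using that $\homb(\bigoplus X_\alpha, N) = \prod \homb(X_\alpha, N)$, shows that for each such $N$ the class of $X$ with $\homb(\Omega^n X, N) = 0$ for all $n$ is again a localizing subcategory containing $\CC_i$, hence $\CC_i^\oplus$. This yields Hom-orthogonality in both directions throughout $\CC^\oplus$.

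To obtain the decomposition, I would let $\CD$ be the full subcategory of $\Stmodg$ whose objects admit an isomorphism $X \cong \bigoplus_{i \in I} X_i$ with $X_i \in \CC_i^\oplus$. Objects of $\CC$ lie in $\CD$ by hypothesis. Closure of $\CD$ under arbitrary coproducts follows by swapping the order of coproducts, and closure under shifts is immediate. For closure under triangles, given a triangle $X \to Y \to Z \to X[1]$ with any two vertices in $\CD$, the orthogonality forces the morphism between them to split as a coproduct indexed by $i \in I$, and completing triangles in each $\CC_i^\oplus$ separately expresses the third vertex as a coproduct of objects in the various $\CC_i^\oplus$. Thus $\CD$ is a localizing subcategory of $\Stmodg$ containing $\CC$, and therefore contains $\CC^\oplus$.

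For indecomposability of $\CC_i^\oplus$, I would suppose $\CC_i^\oplus = \CA \oplus \CB$ and observe that every indecomposable $M \in \CC_i$ splits as $M_\CA \oplus M_\CB$ in $\CC_i^\oplus$, so by indecomposability lies entirely in one of $\CA$ or $\CB$; combined with Hom-orthogonality of $\CA$ and $\CB$ this gives a direct sum decomposition of $\CC_i$ itself. Since $\CC_i$ is a block of $\CC$, one side must vanish, say $\CC_i \subseteq \CA$. Because $\CA$ is triangulated and closed under arbitrary coproducts in $\Stmodg$ (its projection from $\CC_i^\oplus$ is both adjoint to the inclusion and so preserves coproducts), it is localizing in $\Stmodg$, hence contains $\CC_i^\oplus$, forcing $\CB = 0$. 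The main technical point I expect to be delicate is the orthogonality argument of the first paragraph, where the passage from finite Hom-orthogonality in $\CC$ to orthogonality in $\CC^\oplus$ requires a careful two-step application of compactness of finitely generated modules together with the behaviour of $\homb$ on coproducts in each slot; once this is in place the remaining steps are routine manipulations with localizing subcategories and triangles.
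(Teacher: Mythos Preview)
Your proof is correct and follows essentially the same approach as the paper's: both establish Hom-orthogonality by using compactness of objects of $\CC_i$ and then passing to localizing subcategories in two steps, both show the decomposition covers $\CC^\oplus$ by a localizing-subcategory argument, and both reduce indecomposability of $\CC_i^\oplus$ to that of $\CC_i$. The only minor difference is in the final step of the indecomposability argument, where the paper observes directly that every nonzero object of $\CC_i^\oplus$ receives a nonzero map from some object of $\CC_i$, whereas you instead argue that the summand $\CA$ containing $\CC_i$ is itself localizing; these are equivalent routine endgames.
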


\begin{proof}
  Let $X$ be an object of $\CC_i^{\oplus}$ and let $Y=\bigoplus_{j\neq
    i}Y_j$ be a coproduct of objects of $\{\CC_j:j\neq i\}$. For any
  object $X'$ of $\CC_i$ and any object $Y_j'$ of $\CC_j$, where
  \mbox{$i\neq j$}, $\homb(X',Y_j')=0$. Since $X'$ is a compact object of
  $\Stmodg$, the functor $\homb(X',-)$ preserves arbitrary coproducts,
  and so $\homb(X',Y)=0$. Since the class of objects with no nonzero
  maps to $Y$ is a localizing subcategory of $\Stmodg$ that contains
  $\CC_i$, it contains $\CC_i^{\oplus}$, and hence $\homb(X,Y)=0$. A
  similar proof shows that $\homb(Y,X)=0$. Since $\bigoplus_{i\in
    I}\CC_i^{\oplus}$ is a localizing subcategory of $\CC^{\oplus}$
  that contains $\CC$, it must be the whole of $\CC^{\oplus}$. So
  $\CC^{\oplus}=\bigoplus_{i\in I}\CC_i^{\oplus}$ is a direct sum
  decomposition.
  
  It remains to show that $\CC_i^{\oplus}$ has no nontrivial direct
  sum decomposition. Suppose that $\CC_i^{\oplus}=\CD\oplus\CD'$. Then
  since $\CC_i$ has no nontrivial direct sum decomposition, either
  $\CD$ or $\CD'$ must contain every object of $\CC_i$. But every
  object of $\CC_i^{\oplus}$ has a nonzero map from some object of
  $\CC_i$, so either $\CD$ or $\CD'$ contains all objects of
  $\CC_i^{\oplus}$.
\end{proof}
  
\section{Fixed lines in the variety of a normal elementary abelian subgroup}
\label{sec-fixed}

In this section, we shall show that there is one important situation
where the ext-blocks of the category $\CC_V$ coincide with the
ordinary blocks.

If $V$ is a closed subvariety of $V_G(k)$ then we say that $V$ is
\textbf{minimally supported} on an elementary abelian subgroup $E$ of
$G$ if $E$ is a minimal elementary abelian subgroup such that $V
\subseteq \res_{G,E}^*(V_E(k))$. We know that if $V$ is an irreducible
subvariety, then $V$ is minimally supported on some $E$, which is
unique up to conjugacy in $G$.

\begin{thm} \label{main-thm} Suppose that the finite group $G$ has a
  normal elementary abelian subgroup $E$.  Let $V'$ be a line in
  $V_E(k)$; i.e., an irreducible linear subspace of $V_E(k)$ of
  dimension one. Let $V = \res_{G,E}^*(V')$, and assume that $V$ and
  $V'$ have the properties that
\begin{enumerate}
\item $V$ is minimally supported on $E$, and
\item $V'$ is stable under the action of $G/C_G(E)$. 
\end{enumerate}
 Then the ext-blocks of $\CC_V$ coincide with the ordinary blocks.
\end{thm}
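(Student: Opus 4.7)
The plan is to establish the nontrivial direction: any two nonprojective indecomposable modules $M, N \in \CC_V$ lying in the same ordinary block $B$ of $kG$ satisfy $M \sim_V N$ (the reverse containment is Proposition \ref{extblock-properties}(3)).

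First I would construct a test module $L$ with $V_G(L)=V$ that lies in the principal block $B_0$ of $kG$. Since $V$ is a closed homogeneous subvariety it is the vanishing locus of finitely many homogeneous elements $\zeta_1,\dots,\zeta_t \in \HH{*}{G}{k}$; set $L = L_{\zeta_1} \otimes \cdots \otimes L_{\zeta_t}$. Each Carlson module $L_{\zeta_i}$ fits into a defining short exact sequence $0 \to L_{\zeta_i} \to \Omega^{n_i}(k) \to k \to 0$ and lies in $B_0$; since tensoring a module $M \in B$ with $L_{\zeta_i}$ gives $M \otimes L_{\zeta_i}$ in a triangle with $M \otimes \Omega^{n_i}(k) = \Omega^{n_i}(M)$ and $M$, both in $B$, this operation preserves blocks. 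Iterating, $M \otimes L$ remains in $B$ for every $M \in B$, and the tensor-product theorem for varieties gives $V_G(L) = V$.

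Next I would mimic the argument in the proof of Proposition \ref{extblock-properties}(1). For nonprojective $M \in \CC_V \cap B$, the module $M \otimes L$ is nonprojective (variety $V_G(M) \cap V = V_G(M)$) and lies in $B$, so some simple $S$ in $B$ has $\homb(M \otimes L, S) \neq 0$, hence $\homb(M, L^* \otimes S) \neq 0$. An indecomposable summand $U$ of $L^* \otimes S$ then satisfies $M \sim_V U$, and $V_G(U) \subseteq V \cap V_G(S)$; since $V$ is an irreducible line and $U$ is nonprojective, this intersection is nonzero and equals $V$, forcing $V \subseteq V_G(S)$ and $U \in \CC_V \cap B$. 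Thus every nonprojective indecomposable of $\CC_V \cap B$ is $\sim_V$-equivalent to a summand of $L^* \otimes S$ for some simple $S \in B$ with $V \subseteq V_G(S)$.

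The main obstacle is then to show that such summands are $\sim_V$-connected across different simples $S, S'$ in $B$. My plan is to exploit the Ext${}^1$-chain $S = T_0, T_1, \dots, T_n = S'$ of simples in $B$: tensoring a nonsplit extension $0 \to T_i \to X_i \to T_{i-1} \to 0$ by $L^*$ produces a short exact sequence in $B$ whose connecting class in $\widehat{\Ext}_{kG}^1(L^* \otimes T_{i-1}, L^* \otimes T_i)$, when nonzero in $\stmodg$, supplies the required $\sim_V$-link between indecomposable summands. The hypotheses on $E$ and $V'$ are essential here: normality of $E$ and $G$-stability of $V'$ let one use Clifford theory relative to $kC_G(E)$ to control the simples of $B$, while the minimal-support assumption ensures enough simples $T$ satisfy $V \subseteq V_G(T)$ to re-route the chain through simples for which $L^* \otimes T$ is not projective. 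Verifying this re-routing --- in particular bypassing simples $T$ with $V \cap V_G(T) = \{0\}$ and ensuring the tensored extensions remain nonsplit in $\stmodg$ --- is where I expect the serious technical work of the proof to be concentrated.
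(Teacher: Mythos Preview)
Your setup is sound through the second paragraph: reducing to a finite set of ``test summands'' via a module $L$ with $V_G(L)=V$ is exactly the mechanism behind Proposition~\ref{extblock-properties}(1). The gap is in the third paragraph, which you yourself flag as the hard part but for which you offer no concrete mechanism. Two real obstructions arise. First, a simple $T$ in the $\Ext^1$-chain may satisfy $V\cap V_G(T)=\{0\}$, so $L^*\otimes T$ is projective and the link breaks; you propose to ``re-route'' the chain, but nothing in Clifford theory for $kC_G(E)$ guarantees an alternative chain avoiding such simples. Second, even when $L^*\otimes T_{i-1}$ and $L^*\otimes T_i$ are both nonprojective, tensoring a nonsplit extension by $L^*$ can produce a stably split triangle, or a connecting map that is nonzero only between some of the indecomposable summands; you would need to control \emph{which} summands are linked, not just that some map is nonzero. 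Neither hypothesis (1) nor (2) gives you leverage on these points in the way you sketch.

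The paper's argument avoids both obstructions by constructing a different test module. Hypotheses (1) and (2) force $G/C_G(E)$ to be cyclic of $p'$-order acting on $V'$ by a character~$\chi$, and this allows one to choose a $G$-eigenvector $\Fu\in\Rad(kE)\setminus\Rad^2(kE)$ generating a shifted cyclic subgroup $U=\langle 1+\Fu\rangle$ that detects $V'$. The key module is then $X=kG/\Fu kG\cong(kE/\Fu kE)^{\uparrow G}$, whose $B$-summands are $X_i=P_i/\Fu P_i$, one for each indecomposable projective $P_i$ in $B$. Every $X_i$ is automatically nonprojective with variety $V$, so the ``bypassing'' problem never arises. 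Because $X$ is induced from a $p$-group, Frobenius reciprocity gives $\Ext^n_{kG}(M,X)\neq0$ for every nonprojective $M\in\CC_V$, so every such $M$ is $\sim_V$-equivalent to some $X_i$. Finally, the $X_i$ are linked to each other not through $\Ext^1$-chains of simples but by lifting maps $P_i\to P_j$ to maps between $\chi$-twists of the $X_i$, using that $\Fu$ is a $\chi$-eigenvector. The hypotheses are thus used to build $\Fu$, not for Clifford-theoretic control of simples.
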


For the remainder of this section we shall assume the hypotheses and
notation of the theorem.  Let $E = \langle x_1, \dots, x_n \rangle$
have rank $n$.

\begin{lemma} \label{lem1} $G/C_G(E)$ is a cyclic group of order prime
  to $p$, and acts on $V'$ by a linear character $\chi$.
\end{lemma}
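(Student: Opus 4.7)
The plan is this. Since $V'$ is a one-dimensional $k$-subspace of $V_E(k)$ stabilized by $G/C_G(E)$, the action defines a scalar character $\chi: G/C_G(E) \to GL(V') \cong k^*$. If I can show that $\chi$ is injective, then $G/C_G(E)$ embeds as a finite subgroup of $k^*$, which is automatically cyclic of order prime to $p$ since any finite subgroup of the multiplicative group of a field is cyclic and $k^*$ has no $p$-torsion in characteristic $p$. The entire content of the lemma is therefore concentrated in the faithfulness of $\chi$.

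For injectivity I would use the standard identification $V_E(k) \cong E \otimes_{\mathbb{F}_p} k$, under which $G/C_G(E)$ acts on $V_E(k)$ through its conjugation action on $E$, extended $k$-linearly. Suppose $\sigma \in G/C_G(E)$ is nontrivial and fixes $V'$ pointwise. Then $V'$ lies in the $1$-eigenspace of $\sigma$ on $V_E(k)$. Because $\sigma - 1$ is $\mathbb{F}_p$-linear on $E$ and $k$ is flat over $\mathbb{F}_p$, this eigenspace coincides with $E^\sigma \otimes_{\mathbb{F}_p} k$, where $E^\sigma \leq E$ is the fixed subgroup of $\sigma$ acting on $E$. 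Since $\sigma$ acts nontrivially on $E$, the subgroup $E^\sigma$ is properly contained in $E$, and the inclusion $E^\sigma \hookrightarrow E$ yields
$$V' \subseteq E^\sigma \otimes_{\mathbb{F}_p} k = \res_{E,E^\sigma}^*(V_{E^\sigma}(k)).$$
Applying $\res_{G,E}^*$ and using functoriality of restriction gives $V \subseteq \res_{G,E^\sigma}^*(V_{E^\sigma}(k))$, contradicting hypothesis (1) that $E$ is a minimal elementary abelian subgroup with $V$ contained in $\res_{G,E}^*(V_E(k))$.

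The main obstacle is the identification of the $1$-eigenspace of $\sigma$ with the base change of the honest subgroup $E^\sigma$ of $E$. Everything in the argument hinges on this eigenspace descending to $\mathbb{F}_p$, so that it corresponds to a bona fide elementary abelian subgroup of $G$ that one can feed into the minimality hypothesis. The flatness of $k$ over $\mathbb{F}_p$ handles this cleanly and uniformly regardless of whether $\sigma$ is a $p$-element, a $p'$-element, or mixed, so a single argument covers all cases without needing to split $\sigma$ into its Jordan parts.
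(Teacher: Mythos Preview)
Your proof is correct and follows essentially the same route as the paper. The paper phrases the key step in coordinates---first deducing from minimality that the coordinates $\alpha_1,\dots,\alpha_n$ of a point on $V'$ are $\mathbb{F}_p$-linearly independent, then observing that any nontrivial $\mathbb{F}_p$-linear automorphism would force an $\mathbb{F}_p$-relation among them---whereas you go directly to the identification of the $1$-eigenspace with $E^{\sigma}\otimes_{\mathbb{F}_p}k$; but these are the same argument, and both rest on the observation that the fixed space of $\sigma$ descends to $\mathbb{F}_p$ and therefore corresponds to an honest proper subgroup of $E$, contradicting minimal support.
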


\begin{proof}
  Suppose that $E$ has rank $n$ and $\alpha = (\alpha_1, \dots,
  \alpha_n)$ is a nonzero point of the rank variety of $E$ corresponding
  to the line $V'$. Then the element 
$$u_{\alpha} = 1 + \sum_{i = 1}^n \alpha_i(x_i-1) \in kE$$ 
has the property that for any $kE$-module $M$, $V' \in V_E(M)$ if and
only if the restriction $M_{\langle u_{\alpha} \rangle}$ is not free
as a $k\langle u_{\alpha} \rangle$-module.  By condition $(1)$, the
elements $\alpha_1, \dots, \alpha_n$ must be linearly independent over
the prime subfield $\bfp \subseteq k$, since, if there existed an
$\bfp$-dependence relation involving the elements $\alpha_1, \dots,
\alpha_n$, then we could find some proper linear subspace of $k$,
defined over $\bfp$, that contained $\alpha$.  But then this linear
subspace would be $\res_{E,F}^*(V_F(k))$ for some proper subgroup
$F\subseteq E$, contradicting condition $(1)$.  It follows that, since
elements of $G/C_G(E)$ act on $V_E(k)$ by an $\bfp$-linear
transformation, no nontrivial element of $G/C_G(E)$ can fix the line
$V'$ pointwise.  Consequently, the action on $V'$ gives us a faithful
representation $\xymatrix{\chi: G/C_G(E) \ar[r] & GL(1,k)}$ and so
$G/C_G(E)$ must be cyclic.
\end{proof}

The primary tool that we need is the following.

\begin{prop} \label{exist-shifted} There exists an element $\Fu \in
  \Rad(kE)$, $\Fu \notin \Rad^2(kE)$ having the following properties,
  where we denote by $U = \langle u \rangle$ the subgroup of the group
  of units of $kE$ generated by the element $u = \Fu +1$, so $U$ is
  cyclic of order $p$.
\begin{enumerate}
\item For $x \in G/C_G(E)$, $x\Fu x^{-1} = \chi(x)\Fu$, for $\chi$ as 
in Lemma \ref{lem1}.
\item If $M$ is a $kE$-module, then $V \subseteq V_E(M)$ if and only 
if $M_U$ is not a free $kU$-module.
\item If $M$ is a $kG$-module, then $V' \subseteq V_E(M)$ if and only 
if $M_U$ is not a free $kU$-module, where $V = \res_{G,E}^*(V')$. 
\end{enumerate}
\end{prop}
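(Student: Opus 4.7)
The plan is to produce $\Fu$ by averaging a naive candidate over the cyclic group $G/C_G(E)$ of order $m$, which is prime to $p$ by Lemma \ref{lem1}. Explicitly, I would choose a nonzero point $\alpha = (\alpha_1, \dots, \alpha_n) \in k^n$ on the line $V'$, set $\Fu_0 = \sum_i \alpha_i(x_i - 1)$, and define
\[
\Fu = \frac{1}{m}\sum_{y \in G/C_G(E)} \chi(y)^{-1}\, y\Fu_0 y^{-1}.
\]

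Property (1) is then immediate: for $x \in G/C_G(E)$, reindexing the sum via $y \mapsto x^{-1}y$ gives $x\Fu x^{-1} = \chi(x)\Fu$. To see $\Fu \in \Rad(kE) \setminus \Rad^2(kE)$, I would use the $G/C_G(E)$-equivariant isomorphism $\Rad(kE)/\Rad^2(kE) \cong E \otimes_{\bfp} k$; under it $\Fu_0$ maps to the vector $\sum_i \alpha_i x_i$, which is an eigenvector of character $\chi$ (this is exactly what the proof of Lemma \ref{lem1} produced). Hence $y\Fu_0 y^{-1} \equiv \chi(y)\Fu_0 \pmod{\Rad^2(kE)}$ for each $y$, and the averaging collapses modulo $\Rad^2(kE)$ to $\Fu \equiv \Fu_0 \not\equiv 0$. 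For the crucial fact $\Fu^p = 0$, which is what makes $U = \langle 1+\Fu \rangle$ cyclic of order $p$, I would use that $kE$ is commutative of characteristic $p$, so Frobenius $z \mapsto z^p$ is additive. Combined with $(x_i - 1)^p = 0$ this gives $\Fu_0^p = \sum_i \alpha_i^p(x_i-1)^p = 0$, and applying the additive Frobenius term-by-term to the averaging formula yields $\Fu^p = 0$.

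For properties (2) and (3), the point is that $U$ is a cyclic shifted subgroup whose linearization $\bar\Fu \in \Rad(kE)/\Rad^2(kE)$ is a nonzero point on the line $V'$. By the generalization of Carlson's rank variety theorem to arbitrary cyclic shifted subgroups (equivalently, the Friedlander--Pevtsova theory of $\pi$-points), the freeness of $M_U$ depends only on the class of $\Fu$ modulo $\Rad^2(kE)$, so it coincides with the criterion recalled in the proof of Lemma \ref{lem1} for the standard linear element $u_{\alpha} = 1 + \Fu_0$: $M_U$ is free if and only if $V' \not\subseteq V_E(M)$. This gives (2), where $V$ is identified with $V'$ via the finite morphism $\res_{G,E}^*$ under hypotheses (1) and (2) of the theorem. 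Statement (3) is then immediate by applying (2) to the restriction $M|_E$, since for a $kG$-module the variety $V_E(M)$ means by convention the support variety of $M|_E$. The main obstacle is precisely this last input: the freeness test for $k\langle 1+\Fu\rangle$-modules should depend only on $\Fu$ modulo $\Rad^2(kE)$, not on the higher-order corrections introduced by averaging. This is a standard but nontrivial fact about cyclic shifted subgroups which should be quoted or briefly justified here.
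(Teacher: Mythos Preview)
Your proof is correct and essentially the same as the paper's: the paper simply asserts, by semisimplicity of the $G/C_G(E)$-action (order prime to $p$), the existence of an eigenvector $\Fu \in \Rad(kE)$ lifting $u_\alpha - 1 \in \Rad(kE)/\Rad^2(kE)$, and your averaging formula is the standard explicit realization of that lift; both arguments then invoke the same key fact from \cite{car-sec6} that perturbing a shifted subgroup generator by an element of $\Rad^2(kE)$ does not change which modules it detects as non-free. Your separate verification that $\Fu^p = 0$ is correct but in fact automatic, since Frobenius on $kE \cong k[X_1,\dots,X_n]/(X_i^p)$ annihilates the entire augmentation ideal.
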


\begin{proof} Corresponding to the line $V$ in $V_E(k)$, there is a
  cyclic shifted subgroup $U = \langle u_{\alpha} \rangle$, such that
  $\alpha = (\alpha_1, \dots, \alpha_n) \in k^n$ and
$$u_{\alpha} = 1 + \sum_{i = 1}^n \alpha_i (x_i -1)$$
as in the previous proof.  For any $kG$-module $M$, $V' \subseteq
V_E(M)$ if and only if the restriction of $M$ to $U$ is not a free
$kU$-module.  Moreover, if we let $u = u_{\alpha} +w$ where $w \in
\Rad^2(kG)$, then the subgroup generated by $u$ has the same property
(see \cite{car-sec6}).

The fact that $V$ is invariant under the action of $G/C_G(E)$, implies
that $u_{\alpha}-1$ must be an eigenvector in the space
$\Rad(kE)/\Rad^2(kE)$ for the action of $G/C_G(E) = \langle x \rangle$
with the eigenvalue $\chi(x)$ for the element $x$. Because $G/C_G(E)$
has order prime to $p$, there is an element $\Fu$ in $\Rad(kG)$ where
$x$ acts with eigenvalue $\chi(x)$ and with the property that
$$
\Fu \ \equiv \ u_{\alpha} -1  \qquad \text{mod} \ \Rad^2(kG).
$$
Taking $u = 1 + \Fu$ and $U = \langle u \rangle$ proves parts $(1)$
and $(2)$. Part $(3)$ follows from the fact that a $kG$-module has the
property that $V \subseteq V_G(M)$ if and only if the restriction of
$M$ to $E$ has the property that $V' \subset V_E(M_E)$
\cite{alp-evens}.
\end{proof}

Let $X = kG/\Fu kG$ be the quotient module. We claim that $X \cong
(kE/\Fu kE)^{\uparrow G}$. This is because clearly 
$$
(kE/\Fu kE)^{\uparrow G} \cong
kG/(\Fu kE)^{\uparrow G}),
$$
and 
$$
(\Fu kE)^{\uparrow G}\cong
kG\otimes_{kE}\Fu kE \cong kG\Fu\otimes_{kE}kE 
\cong kG\Fu,
$$
which is the same as $\Fu kG$ by Proposition~\ref{exist-shifted}(i).

Also $V_G(X) = V$. This is true by a rank variety argument. That is,
$kE/\Fu kE$ is an indecomposable periodic module, and hence its rank
variety is a single line, which must be the line through $\alpha$.

Now we prove the following.

\begin{lemma} \label{lem2} Suppose that $M$ is a nonprojective
  $kG$-module in $\CC_V$. Then $\Ext_{kG}^n(M,X)$ is nonzero for all
  $n$.
\end{lemma}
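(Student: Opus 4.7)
\emph{Plan.} The strategy is to reduce $\Ext_{kG}^n(M,X)$ to a computation over the cyclic $p$-group $U$, where non-vanishing follows from standard properties of modules over cyclic $p$-groups.

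Since $\Fu\in kU\subseteq kE$ and $kE$ is commutative, $kE/\Fu kE\cong kE\otimes_{kU}k$, and combining with the identification $X\cong(kE/\Fu kE)^{\uparrow G}$ already established, transitivity of induction gives $X\cong kG\otimes_{kU}k$. The key structural fact is that $kE$ is a symmetric Frobenius extension of $kU$: one may choose an $\bfp$-basis of $\Rad(kE)/\Rad^2(kE)$ starting with the class of $\Fu$ and lift to generators $\Fu,s_2,\ldots,s_n$ of $kE$ over $k$ satisfying $s_i^p=0$, yielding an algebra isomorphism $kE\cong kU\otimes_{k}A$ where $A=k[s_2,\ldots,s_n]/(s_i^p)$ is itself a Frobenius $k$-algebra. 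Since $kG$ is also a (symmetric) Frobenius extension of $kE$, transitivity gives that $kG$ is a (symmetric) Frobenius extension of $kU$, so $kG\otimes_{kU}N\cong\Hom_{kU}(kG,N)$ as $kG$-modules. The Eckmann--Shapiro isomorphism then gives
$$\Ext_{kG}^n(M,X)\cong\Ext_{kG}^n(M,\Hom_{kU}(kG,k))\cong\Ext_{kU}^n(M|_U,k)$$
for every $n\geq 0$.

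To finish, I would show $M|_U$ is not free. Since $M$ is nonprojective and lies in $\CC_V$, $V_G(M)$ is a nonzero closed conical subvariety of the one-dimensional cone $V=\res_{G,E}^*(V')$, hence $V_G(M)=V$. By the Alperin--Evens compatibility $V\subseteq V_G(M)\Leftrightarrow V'\subseteq V_E(M|_E)$ already cited in the proof of Proposition \ref{exist-shifted}(3), we obtain $V'\subseteq V_E(M|_E)$, and Proposition \ref{exist-shifted}(3) then yields that $M|_U$ is not a free $kU$-module. Consequently $M|_U$ has an indecomposable non-projective direct summand $V_i\cong kU/\Fu^i kU$ with $1\leq i\leq p-1$, and a routine calculation with the minimal (periodic) projective resolution of $V_i$ over $kU$ shows $\Ext_{kU}^n(V_i,k)\cong k$ for every $n\geq 0$; this provides a nonzero direct summand of $\Ext_{kU}^n(M|_U,k)$.

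The main technical point is the Frobenius reciprocity for the shifted-subgroup inclusion $kU\subseteq kG$, since $U$ is not literally a subgroup of $G$ and one cannot invoke the group-theoretic version directly. Once the symmetric Frobenius extension property is established as sketched above, the rest of the argument is purely formal and uses nothing beyond the well-known cohomology of the cyclic $p$-group.
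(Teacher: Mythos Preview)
Your proof is correct (modulo a slip: you want a $k$-basis of $\Rad(kE)/\Rad^2(kE)$, not an $\bfp$-basis --- indeed the coordinates $\alpha_i$ of $\Fu$ are $\bfp$-linearly independent by hypothesis~(1), so the class of $\Fu$ cannot lie in the $\bfp$-span of the $x_i-1$; but your tensor decomposition $kE\cong kU\otimes_k A$ still goes through with a $k$-basis, since the resulting map $k[T_1,\dots,T_n]/(T_i^p)\to kE$ is surjective on $\Rad/\Rad^2$ and both sides have dimension $p^n$).

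The paper takes a shorter route. Rather than descending all the way to $U$ and having to justify the Frobenius-extension property for the shifted subgroup $kU\subseteq kG$, it stops at the genuine subgroup $E\leq G$: since $X\cong(kE/\Fu kE)^{\uparrow G}$, ordinary Eckmann--Shapiro gives
\[
\Ext_{kG}^n(M,X)\cong\Ext_{kE}^n(M,\,kE/\Fu kE),
\]
and this is nonzero for all $n$ because $E$ is a $p$-group and the varieties of $M|_E$ and $kE/\Fu kE$ both contain $V'$ (so their tensor product is non-projective, and over a $p$-group every non-projective module has nonvanishing cohomology in all degrees). Your approach trades this appeal to support-variety theory over $E$ for an explicit cyclic-group calculation over $U$, at the cost of establishing the symmetric Frobenius extension $kU\subseteq kG$; the paper's approach avoids that technical step entirely by never leaving the realm of honest subgroups.
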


\begin{proof}
This follows from the fact that $X$ is induced from a 
$p$-subgroup: 
$$\Ext_{kG}^n(M,X)\cong \Ext^n_{kG}(M, (kE/\Fu kE)^{\uparrow G})\cong
\Ext^n_{kE}(M,kE/\Fu kE)\neq0$$
since the varieties of $M$ and $kE/\Fu kE$ both contain $V'$.
\end{proof}

For each indecomposable projective summand $P$ of $kG$, $X$ has a
corresponding summand $P/\Fu P$, which is indecomposable (since it has
a simple top) and non-projective (since its restriction to $kE$ is a
direct sum of copies of $kE/\Fu kE$).  At this point we fix a
$p$-block $B$ of $G$, let $\{P_1,\dots,P_t\}$ be a complete set of
representatives of the isomorphism classes of the projective
indecomposable $kG$-modules in $B$, and let $X_i=P_i/\Fu P_i$ for
$i\in\{1,\dots,t\}$, so $\{X_1,\dots,X_t\}$ is a complete set of
representatives of the isomorphism classes of indecomposable summands
of $X$ in the block $B$.

So $X_1,\dots,X_t$ are in $\CC_V\cap B$ (which is therefore nonzero),
and Lemma \ref{lem2} implies that if $M$ is any other module in $\CC_V
\cap B$, then there is some $i\in\{1, \dots, t\}$ such that $M \sim_V
X_i$.  Consequently, in order to show that the modules in $\CC_V \cap
B$ are all in the same ext-block, and to prove Theorem~\ref{main-thm},
it remains to show that $X_i, \sim_V X_j$ for all $i$ and $j$. The
first step in this direction is the following. Recall that $\chi$ is
the character having $C_G(E)$ as its kernel such that $g \Fu g^{-1} =
\chi(g) \Fu$.  Let $\CY_i$ be the $kG$-module of dimension one which
affords the character $\chi^i$.

\begin{lemma} \label{lem3}
For any $i$ and $j$, we have that $\CY_i \otimes X_j \sim_V X_j$.
\end{lemma}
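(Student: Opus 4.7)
The plan is to exploit an isomorphism $\CY_i\otimes X\cong X$, obtained from the projection formula, and then to transport the nonvanishing Tate cohomology of Lemma \ref{lem2} across this isomorphism to build a $\sim_V$-link between $X_j$ and $\CY_i\otimes X_j$.

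First, I would establish $\CY_i\otimes X\cong X$ as $kG$-modules. Since $X\cong(kE/\Fu kE)^{\uparrow G}$ (as noted after Proposition~\ref{exist-shifted}) and $E\subseteq C_G(E)=\ker\chi$ forces $\CY_i|_E$ to be the trivial $kE$-module, the projection formula gives
$$\CY_i\otimes X\;\cong\;(\CY_i|_E\otimes kE/\Fu kE)^{\uparrow G}\;\cong\;(kE/\Fu kE)^{\uparrow G}\;=\;X.$$
By Krull--Schmidt, $\CY_i\otimes X_j$ is then isomorphic to an indecomposable summand of $X$.

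Next I would verify that this summand lies in block $B$, so that $\CY_i\otimes X_j\cong X_{j'}$ for some $j'\in\{1,\dots,t\}$. For this, I check that $\CY_i$ lies in the principal block of $kG$: if $g\in G$ has conjugacy class of size coprime to $p$, then $C_G(g)$ contains a Sylow $p$-subgroup, hence contains the normal subgroup $E$, so $g\in C_G(E)=\ker\chi$ and $\chi^i(g)=1$. The central character of $\CY_i$ on class sums therefore agrees with that of the trivial module, placing $\CY_i$ in the principal block. Tensoring with a principal-block module preserves each ordinary block, so $\CY_i\otimes X_j\in B$.

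Finally, to produce the $\sim_V$-equivalence, I would apply Lemma~\ref{lem2} to the nonprojective module $X_j$: for every $n\in\bZ$, $\widehat\Ext^n_{kG}(X_j,X)\neq 0$. Using the isomorphism of the first step together with tensor--hom adjunction,
$$\widehat\Ext^n_{kG}(X_j,X)\;\cong\;\widehat\Ext^n_{kG}(X_j,\CY_i\otimes X)\;\cong\;\widehat\Ext^n_{kG}(\CY_{-i}\otimes X_j,X).$$
Expanding $X=\bigoplus_k n_k X_k$ yields, for some $k$ (depending on $n$), a nonzero $\widehat\Ext^n(\CY_{-i}\otimes X_j,X_k)$, hence $\CY_i\otimes X_j\sim_V X_{\sigma_i(k)}$. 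Combined with the explicit identification from the second step of $\CY_i\otimes X_j$ as the specific summand $X_{j'}$ of $X$, and iterating the argument with $X_j$ replaced by $\CY_{ri}\otimes X_j$ for $r=0,1,2,\ldots$, the finite order of $\chi$ forces the chain of $\sim_V$-links to close up to a direct link $\CY_i\otimes X_j\sim_V X_j$.

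The hard part will be this last step: Lemma~\ref{lem2} alone only yields $\sim_V$-links to \emph{some} summand of $X$, not necessarily to the specific $X_j$. The idea for closing the chain is to exploit both the explicit projection-formula isomorphism (which pins down the iso class of $\CY_i\otimes X_j$) and the fact that the autoequivalence $\CY_i\otimes(-)$ of $\CC_V$ permutes the $\sim_V$-equivalence classes and has finite order, forcing orbits to lie within a single class.
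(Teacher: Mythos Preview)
Your first two steps are fine, and the projection-formula identity $\CY_i\otimes X\cong X$ is a clean observation. The real problem is the final step. Knowing that $\CY_i\otimes(-)$ is an autoequivalence of $\CC_V$ of finite order tells you only that it \emph{permutes} the ext-blocks; it does not force that permutation to be the identity. Your Lemma~\ref{lem2} manoeuvre produces a link $\CY_{-i}\otimes X_j\sim_V X_k$ for \emph{some} summand $X_k$ of $X$, but nothing pins $k$ down, and iterating just yields further links of the same indeterminate shape. Concretely, nothing in the argument excludes a scenario with two ext-blocks interchanged by $\CY_i\otimes(-)$: every chain you build then stays inside a single ext-block and never forces $\CY_i\otimes X_j$ into the class of $X_j$. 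The sentence ``the finite order of $\chi$ forces the chain of $\sim_V$-links to close up'' is exactly where the argument breaks---finite order of a permutation does not make its orbits singletons.

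The paper avoids this by proving a sharper statement at the level of individual summands. Using the eigenvector property $g\Fu g^{-1}=\chi(g)\Fu$ from Proposition~\ref{exist-shifted}, one writes down an explicit $kG$-homomorphism $\CY_1\otimes P_j\to P_j$, $y\otimes a\mapsto \Fu a$, and reads off from the resulting exact sequences that $\CY_r\otimes X_j\cong\Omega^s(X_j)$ for a specific $s$ (namely $s=r$ when $p=2$ and $s=2r$ when $p$ is odd). Since every module is automatically $\sim_V$-equivalent to all of its syzygies, this gives $\CY_r\otimes X_j\sim_V X_j$ at once. The point is that the global isomorphism $\CY_i\otimes X\cong X$ is too coarse: what is needed is the summand-by-summand identification with a syzygy of $X_j$, and that comes from the explicit element $\Fu$, not from the projection formula.

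A smaller issue: the assertion that tensoring with a one-dimensional module in the principal block fixes \emph{every} ordinary block is not a general fact and would itself need an argument here. Your central-character computation only shows that $\CY_i\otimes(-)$ fixes the principal block. This is secondary to the main gap above, but worth noting.
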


\begin{proof}
  First notice that if we fix a nonzero element $y\in\CY_i$, the map
$$
\xymatrix{
\mu: \CY_1 \otimes P_j  \ar[r] & P_j
}
$$ 
given by $\mu(y \otimes x) = \Fu x$ is a $kG$-module homomorphism,
since for $g \in G$,
$$
\mu(g(y\otimes x)) = \mu(\chi(g)y\otimes gx) = \chi(g)\Fu gx = 
g\Fu g^{-1}gx = g\mu(y \otimes x).
$$
The cokernel of $\mu$ is $X_j$, and because $P_j$ is free as a $kU$-module,
the kernel of $\mu$ is $\Fu^{p-1} (\CY_1 \otimes P_j)$. Because 
$\CY_1 \otimes P_j$ is projective,
we have that 
$$
\Omega(X_j) \cong 
(\CY_1 \otimes P_j)/ \Fu^{p-1}(\CY_1 \otimes P_j).
$$

If $p = 2$, this proves that $\CY_1 \otimes X_j \sim_V X_j$, and we
can iterate the argument to get the conclusion of the lemma.  That is,
$\CY_t \otimes X_j \cong \Omega^t(X_j)$ for all $j$ and all $t$.

So we can assume that $p > 2$. In this situation we observe, by
similar means, that 
$$\Omega^2(X_j) \cong \Fu^{p-1} (\CY_1 \otimes P_j)
\cong \CY_p \otimes X_j.$$ 
So in this case we have that
$$\Omega^{2t}(X_j) \cong \CY_t \otimes X_j.$$
\end{proof}

The final fact we need to complete the proof of Theorem \ref{main-thm}
is the following.

\begin{lemma} \label{lem4} For some $1 \leq i, j \leq t$, suppose that
  $\Hom_{kG}(P_i,P_j)\neq0$.  Then $\homb_{kG}(\CY_k \otimes X_i,
  \CY_{\ell} \otimes X_j)\neq0$ for some $k$ and $\ell$.
\end{lemma}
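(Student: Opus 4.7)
The plan is to filter $P_j$ by powers of $\Fu$, identify the successive quotients with twists of $X_j$ by the characters $\CY_r$, and pick the deepest layer at which a nonzero map from $P_i$ still survives.

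Since $g\Fu g^{-1}=\chi(g)\Fu$, the ideal $\Fu kG$ equals $kG\Fu$ and is two-sided, so the submodules $\Fu^r P_j$ give a descending filtration of $P_j$ with $\Fu^p P_j=0$. Exactly as in the proof of Lemma~\ref{lem3}, the map $\mu_r\colon\CY_r\otimes P_j\to P_j$, $y\otimes x\mapsto \Fu^r x$, is $kG$-linear (because $E\subseteq\ker\chi$), with image $\Fu^r P_j$ and kernel $\CY_r\otimes\Fu^{p-r}P_j$; passing to the quotient by $\Fu^{r+1}P_j$ yields a $kG$-isomorphism $\Fu^r P_j/\Fu^{r+1}P_j\cong \CY_r\otimes X_j$.

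Next, I would choose $s\in\{0,\dots,p-1\}$ to be the largest integer with $\Hom_{kG}(P_i,\Fu^s P_j)\neq 0$; this exists by the hypothesis and since $\Fu^p=0$. Applying $\Hom_{kG}(P_i,-)$ to the short exact sequence $0\to\Fu^{s+1}P_j\to\Fu^s P_j\to\CY_s\otimes X_j\to 0$, and using the vanishing of $\Hom_{kG}(P_i,\Fu^{s+1}P_j)$ (from maximality of $s$), gives an isomorphism $\Hom_{kG}(P_i,\Fu^s P_j)\cong \Hom_{kG}(P_i,\CY_s\otimes X_j)$, so the right-hand side is nonzero. Since $\Fu$ acts on $\CY_s\otimes X_j$ as $\id\otimes\Fu$ and acts by zero on $X_j=P_j/\Fu P_j$, it annihilates $\CY_s\otimes X_j$, so any such map automatically factors through $X_i=P_i/\Fu P_i$ to give a nonzero $\bar\phi\colon X_i\to \CY_s\otimes X_j$.

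The step I expect to be the main technical point is verifying that $\bar\phi$ is not stably zero, i.e.\ that $\PHom_{kG}(X_i,\CY_s\otimes X_j)=0$. Any such stably zero map factors through the projective cover $\CY_s\otimes P_j\twoheadrightarrow\CY_s\otimes X_j$, and lifting along $P_i\twoheadrightarrow X_i$ produces a $kG$-map $P_i\to\CY_s\otimes P_j$ that vanishes on $\Fu P_i$, and hence has image in $\ker(\Fu\mid \CY_s\otimes P_j)=\CY_s\otimes\Fu^{p-1}P_j$. Since $\Fu^{p-1}P_j\subseteq \Fu P_j$, this submodule projects to zero in $\CY_s\otimes(P_j/\Fu P_j)=\CY_s\otimes X_j$, so $\PHom_{kG}(X_i,\CY_s\otimes X_j)=0$. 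Therefore $\bar\phi$ is a nonzero class in $\homb_{kG}(X_i,\CY_s\otimes X_j)$, proving the lemma with $k=0$ and $\ell=s$.
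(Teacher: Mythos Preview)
Your proof is correct and follows essentially the same line as the paper's: both use the filtration of $P_j$ by powers of $\Fu$, locate the deepest layer still carrying a nonzero map from $P_i$, and then pass to the associated graded piece $\CY_r\otimes X_j$. The one difference worth noting is in verifying that the resulting map is stably nonzero---the paper argues by restriction to $kU$, whereas you show directly that $\PHom_{kG}(X_i,\CY_s\otimes X_j)=0$ via the $\Fu$-socle of the projective cover, which is slightly more self-contained.
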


\begin{proof}
  Let $\xymatrix{\varphi:P_i \ar[r] & P_j}$ be a nonzero homomorphism.
  Let $m$ be the greatest integer such that $\Fu^m\varphi(P_i)\neq0$.
  Then fix a nonzero element $y\in\CY_m$ and define $\xymatrix{ \psi:
    \CY_m \otimes P_i \ar[r] & P_j}$ by $\psi(y \otimes a) = \Fu^m
  \varphi(a)$ for $y \in \CY_m$ and $a \in P_i$. Then $\psi$ is a
  nonzero $kG$-module homomorphism, the kernel of $\psi$ contains
  $\CY_m\otimes\Fu P_i$, and the image of $\psi$ is contained in
  $\Fu^{p-1} P_j \cong \CY_{p-1} \otimes X_j$.  Therefore $\psi$
  induces a nonzero map $\xymatrix{ \psi': \CY_m \otimes X_i \ar[r] &
    \CY_{p-1} \otimes X_j}$.  Finally we need only observe that
  $\psi'$ cannot factor through a projective $kG$-module because its
  restriction to $kU$ does not factor through a projective
  $kU$-module.
\end{proof}

\begin{proof}[Proof of Theorem \ref{main-thm}.]
  As noted, before the proof of Lemma \ref{lem3}, we need only show
  that $X_i \sim_V X_j$ for every $i$ and $j$. Because $P_1, \dots,
  P_t$ are the projective modules in the block $B$, for any $i$ and
  $j$ there is a sequence $i=i_0, \dots, i_r=j$ such that for every $k
  = 1, \dots, r$, $\Hom_{kG}(P_{i_{r-1}}, P_{i_r})\neq0$. So, by Lemma
  \ref{lem3}, there exist $k$ and $\ell$ such that $\CY_k \otimes
  X_{i_{r-1}} \sim_V \CY_{\ell} \otimes X_{i_r}$. The theorem now
  follows by Lemma \ref{lem2}.
\end{proof}

\section{Some examples.}
\label{examples}
In this section we show some examples in which the ext-blocks
corresponding to a subvariety $V \subseteq V_G(k)$ do not coincide
with ordinary blocks.

For the first example let $H$ be an abelian group of order 28
generated by elements $g, x$ and $y$ such that $g^7 = 1$ and $x^2 = 1
= y^2$, and let $G=H\rtimes C_3$ be the semidirect product of $H$ by a
cyclic group $C_3 = \langle z \rangle$ of order 3 acting on $H$ by
$$
zgz^{-1} = g^2 \qquad zxz^{-1} = y \qquad zyz^{-1} = xy.
$$
Let $k$ be a field of characteristic 2 that contains a primitive
$7^{th}$ root of unity, which we denote $\zeta$.  Then $kH$ has seven
simple modules $N_0,\dots,N_6$, each one dimensional, where $g$ acts
on $N_i$ by multiplication by $\zeta^i$. Each simple module is in a
different block of $kH$, and we denote by $b_i$ the block containing
$N_i$.

Then $z$ permutes the simple $kH$-modules, and hence the blocks of
$kH$. That is, $z \otimes N_1 \cong N_4$, $z \otimes N_4 \cong N_2$,
etc. Moreover, $kG$ has exactly three irreducible modules, $k$, $M_1$
and $M_2$ where $(M_1)_H \cong N_1 \oplus N_2 \oplus N_4$ and $(M_2)_H
\cong N_3 \oplus N_5 \oplus N_6$, and each one is the unique simple
module in a block of $kG$. Let $B_0, B_1$ and $B_2$ be the blocks of
$kG$ containing $k, \ M_1$ and $M_2$ respectively.

Now suppose that $V'$ is a line in $V_H(k) \cong k^2$ such that $V'$ 
is not stable under the action of $G/H$, and that 
$V = \res^*_{G,H}(V') \subseteq V_G(k)$.

\begin{prop}
  Each of the subcategories $\CC_V \cap B_1$ and $\CC_V \cap B_2$ is a
  direct sum of three ext-blocks.
\end{prop}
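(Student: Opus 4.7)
The plan is to use Clifford theory with respect to the normal subgroup $H$ to reduce the analysis to a question about $kH$-modules, then to combine Carlson's connectedness of the projective support variety with Theorem~\ref{main-thm} applied to $H$ itself. The key observation is that $G/H=C_3$ transitively permutes the three blocks $b_1,b_2,b_4$ of $kH$ whose union corresponds to $B_1$, with stabilizer $H$, so every indecomposable $kG$-module $M$ in $B_1$ has the form $M=L^{\uparrow G}$ for an indecomposable $b_1$-module $L$ that is uniquely determined up to isomorphism (as the $b_1$-component of $M_H$). Using the standard formula $V_G(L^{\uparrow G})=\res_{G,H}^*(V_H(L))$, $M$ lies in $\CC_V$ precisely when $V_H(L)$ is contained in the preimage of $V$, namely the three-line union $V'_1\cup V'_2\cup V'_3$ (writing $V'_i=z^{i-1}V'$).

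Next I would show that each such $V_H(L)$ actually sits in a single line $V'_{i(L)}$: via the Morita equivalence $b_1\simeq kE$ given by tensoring with $N_1$, $V_H(L)$ matches the rank variety of an indecomposable $kE$-module, which is projectively connected by Carlson's theorem; since $V'_1,V'_2,V'_3$ meet only at the origin, $V_H(L)\subseteq V'_{i(L)}$ for a unique $i(L)\in\{1,2,3\}$, partitioning the indecomposables of $\CC_V\cap B_1$ into three classes $\CX_1,\CX_2,\CX_3$. For orthogonality, Frobenius reciprocity combined with the fact that $b_1,b_2,b_4$ are distinct blocks of $kH$ gives a natural isomorphism
$$
\widehat{\Ext}_{kG}^n(L^{\uparrow G},{L'}^{\uparrow G})\cong\widehat{\Ext}_{kH}^n(L,L')
$$
for $L,L'\in b_1$ and all $n\in\bZ$; the right-hand side is supported on $V_H(L)\cap V_H(L')=\{0\}$ whenever $i(L)\neq i(L')$, so the three classes $\CX_i$ sit in distinct ext-blocks of $\CC_V\cap B_1$. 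For connectedness inside each class I would apply Theorem~\ref{main-thm} to $kH$ with its normal elementary abelian subgroup $E$ and the line $V'_i$: condition~(2) is vacuous because $H$ is abelian, and condition~(1) is satisfied after replacing $E$, if necessary, by the rank-one subgroup on which $V'_i$ is minimally supported (still normal in the abelian $H$). The theorem then identifies the ext-blocks of $\CC_{V'_i}(kH)$ with the nonsimple blocks of $kH$, so $\CC_{V'_i}\cap b_1$ is a single ext-block, and the displayed Hom-isomorphism lifts this connectedness to $\CX_i$. Hence $\CC_V\cap B_1$ decomposes into exactly three ext-blocks, and the same argument applies verbatim to $B_2$.

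The main obstacle is bridging variety data and Hom-space data: one must show that no indecomposable $b_1$-module has variety split between two of the lines $V'_i$ (supplied by Carlson's projective connectedness), and conversely that modules whose varieties sit on a single line really are connected by nonzero Tate Ext groups (supplied by Theorem~\ref{main-thm} applied to $H$). Both ingredients are essential, and the count of three ext-blocks reflects exactly the length of the $C_3$-orbit of $V'$.
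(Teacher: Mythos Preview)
Your argument is correct and follows the same overall route as the paper: both use Clifford theory to identify indecomposables of $\CC_V\cap B_1$ with $b_1$-indecomposables via induction from $H$, both use Frobenius reciprocity together with block orthogonality in $kH$ to reduce $\homb_{kG}$ to $\homb_{kH}$ between $b_1$-modules, and both separate the three pieces by observing that the lines $V'_i$ intersect pairwise only at the origin. The paper is also tacitly using Carlson's connectedness theorem at the point where it asserts that $X_1$ lies in exactly one of $\CC_{V'},\CC_{z(V')},\CC_{z^2(V')}$, so your explicit mention of it is appropriate.

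The only genuine difference is in showing that each class $\CX_i$ is a single ext-block. You invoke Theorem~\ref{main-thm} applied to the abelian group $H$, which is valid but heavier than needed. The paper instead observes directly that for nonprojective $X_i,Y_i\in b_i\cap\CC_{V'}$ one has $\homb_{kH}(X_i,Y_i)\cong\homb_{kH}(k,X_i^*\otimes Y_i)\neq 0$, since $X_i^*\otimes Y_i$ lies in the principal block $b_0$ (essentially $kE$) and is nonprojective (its variety is $V'\cap V'=V'$). This elementary argument bypasses the machinery of Section~\ref{sec-fixed} entirely and actually yields the stronger conclusion that any two objects of $\CX_i$ are linked by a single nonzero stable map rather than merely a chain.
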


\begin{proof}
  Suppose that $X$ is any module in $B_1$. Then $X_H \cong X_1 \oplus
  X_2 \oplus X_4$, where $X_i$ is a module in $b_i$. Indeed, since
  $\vert G:H \vert$ is not divisible by 2, $X$ is a direct summand of
  $X_H^{\uparrow G}$, and it is easy to see that $X \cong
  X_1^{\uparrow G}$. If, in addition $X$ is an indecomposable object
  of $\CC_V$, then $X_1$ is in exactly one of the subcategories
  $\CC_{V'}$, $\CC_{z(V')}$ or $\CC_{z^2(V')}$. So let $\CU_i$ be the
  subcategory of $B_1 \cap \CC_V$ consisting of all $X$ such that $X_i
  \in \CC_{V'}$ for $i = 1, 2$ or 4.

  Now suppose that $X$ and $Y$ are both in $\CU_i$ for $i = 1,2$ or 4.
  Then $X \cong X_i^{\uparrow G}$ and $Y \cong Y_i^{\uparrow G}$ for
  some $X_i$ and $Y_i$ in $b_i \cap \CC_{V'}$. Hence 
$$ 
\homb_{kG}(X,Y) \cong \homb_{kH}(X_i, Y_1 \oplus Y_2 \oplus Y_4) \neq0,
$$
since
$$
\homb_{kH}(X_i, Y_i) \cong \homb_{kH}(k, X_i^* \otimes Y_i) \neq0,
$$
because $X_i^* \otimes Y_i$ is in the principal block $b_0$.

Suppose on the other hand that $X \in \CU_i$ and $Y \in \CU_j$ for
$i \neq j$. Then, as before,  $X \cong X_i^{\uparrow G}$ and 
$Y \cong Y_j^{\uparrow G}$. In this case
$$ 
\homb_{kG}(X,Y) \cong \homb_{kH}(X_i, Y_1 \oplus Y_2 \oplus Y_4) =0,
$$
since
$$
\homb_{kH}(X_i, Y_i) =0
$$ 
because the varieties of $X_i$ and $Y_i$ intersect trivially, while
 $$
\homb_{kH}(X_i, Y_{\ell}) =0
$$ 
for $\ell \neq i$ because $X_i$ and $Y_{\ell}$ are in different blocks
of $kH$. Hence we have proved that the subcategories $\CU_1$.  $\CU_2$
and $\CU_4$ are the ext-blocks of $\CC_{V} \cap B_1$.
\end{proof}

We should remark that many examples can be constructed along the lines
we have just presented. For example, suppose that $p = 3$ and that $H
= C_5 \times C_3^2$ and $G \cong H \rtimes C_2$, where the generator
of order 2 acts on the $C_5$ and the first $C_3$ by inverting the
elements but acts trivially on the second $C_3$.  Then $H$ has five
blocks, but $G$ has only three. If $V' \subseteq V_H(k)$ is a line
that is not fixed (setwise) by the $C_2$ and if $V = \res_{G,H}^*(V')
\subseteq V_G(k)$, then $B \cap \CC_V$ has two ext-blocks for each
nonprincipal block $B$ of $G$. A similar thing happens in
characteristic five when $G = (C_3 \times C_5^2) \rtimes C_2$, or in
characteristic seven when $G = (C_2^2 \times C_7^2) \rtimes C_3$.  We
shall show that these examples are typical of what happens in general.

The examples also show some unusual behavior of the idempotent modules.
Specifically, we have the following. Assume the hypothesis and notation
of the example at the beginning of the section.

\begin{cor}
  Let $G = (C_7 \times C_2^2) \rtimes C_3$ and $V$ be as in the example.
  Suppose that $e_V$ is the idempotent module corresponding to the
  subvariety $V$. Recall that $M_1$ is the unique simple module in the
  block $B_1$. Then $e_V \otimes M_1$ is a sum of three modules, one in
  each ext-block.
\end{cor}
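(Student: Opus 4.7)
The plan is to combine the preceding proposition's ext-block decomposition $\CC_V\cap B_1=\CU_1\oplus\CU_2\oplus\CU_4$ with the earlier proposition extending block decompositions to localizing closures. That yields
$$(\CC_V\cap B_1)^\oplus=\CU_1^\oplus\oplus\CU_2^\oplus\oplus\CU_4^\oplus$$
as subcategories of $\Stmodg$. I will first locate $e_V\otimes M_1$ in this sum, producing a unique decomposition $e_V\otimes M_1=Y_1\oplus Y_2\oplus Y_4$ with $Y_i\in\CU_i^\oplus$, and then show $Y_i\ne 0$ for each $i\in\{1,2,4\}$.

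Clearly $e_V\otimes M_1\in\CC_V^\oplus$, since tensoring with $e_V$ is the localization onto $\CC_V^\oplus$. To see it also lies in the block $B_1$, I restrict to $H$. By the standard compatibility of Rickard idempotent modules with restriction, $(e_V)_H\cong e_W$ where $W=(\res_{G,H}^*)^{-1}(V)=V'\cup zV'\cup z^2V'$; because these three lines pairwise meet only at the origin, $(e_V)_H\cong e_{V'}\oplus e_{zV'}\oplus e_{z^2V'}$, all lying in the principal block $b_0$ of $kH$. Hence $(e_V\otimes M_1)_H$ is a direct sum of nine modules $e_{z^aV'}\otimes N_i$ with $i\in\{1,2,4\}$ and $a\in\{0,1,2\}$, each in block $b_i$. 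Since $\{b_1,b_2,b_4\}$ is exactly the set of blocks of $kH$ covered by $B_1$, the module $e_V\otimes M_1$ lies in $B_1$.

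To show $Y_i\ne 0$, I produce a finitely generated test object $Z_i\in\CU_i$ with $\homb_{kG}(Z_i,e_V\otimes M_1)\ne 0$; since $\homb_{kG}(Z_i,Y_j)=0$ for $j\ne i$ by the ext-block decomposition, this forces $Y_i\ne 0$. Because $Z_i\in\CC_V$ and $e_V$ acts as identity on $\CC_V^\oplus$, we have $\homb_{kG}(Z_i,e_V\otimes M_1)\cong\homb_{kG}(Z_i,M_1)$. To build $Z_i$, pick a nonzero $\zeta\in H^1(H,k)$ vanishing on $V'$ and let $W=L_\zeta$ be the associated Carlson module: then $W\in b_0\cap\CC_{V'}$, and $\homb_{kH}(W,k)\ne 0$, as one sees by applying $\homb_{kH}(-,k)$ to the defining triangle $L_\zeta\to\Omega k\to k\to\Omega^{-1}L_\zeta$ and noting that the image of $\homb_{kH}(k,k)\to\homb_{kH}(\Omega k,k)=H^1(H,k)$ is the $1$-dimensional subspace spanned by $\zeta$, while $\dim H^1(H,k)=\dim H^1(C_2^2,k)=2$. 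Then $W\otimes N_i\in b_i\cap\CC_{V'}$, so the induced module $Z_i:=(W\otimes N_i)^{\uparrow G}$ lies in $\CU_i$ by the characterization of the ext-blocks in the preceding proposition. Frobenius reciprocity now gives
$$\homb_{kG}(Z_i,M_1)\cong\homb_{kH}(W\otimes N_i,N_1\oplus N_2\oplus N_4)\cong\homb_{kH}(W\otimes N_i,N_i)\cong\homb_{kH}(W,k)\ne 0,$$
where the middle step uses that $W\otimes N_i\in b_i$ while $N_j\in b_j$, and the last uses $N_i^*\otimes N_i\cong k$.

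The only nonroutine input is the identification $(e_V)_H\cong e_W$, which uses the tensor-triangular compatibility of Rickard idempotent modules with restriction to subgroups; alternatively one could argue directly that $e_V\otimes M_1\in B_1$ via centrality of block idempotents. Everything else reduces to Frobenius reciprocity, the explicit description of $\CU_i$, and the nonvanishing of $\homb_{kH}(L_\zeta,k)$.
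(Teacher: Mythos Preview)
Your argument is correct, but it is considerably more elaborate than the paper's. The paper's proof is three lines: for any nonprojective $X\in\CC_V\cap B_1$, the fact that $M_1$ is the \emph{unique} simple module in $B_1$ forces $\homb_{kG}(M_1,X)\neq0$, and hence $\homb_{kG}(e_V\otimes M_1,X)\neq0$; so $e_V\otimes M_1$ must have a nonzero summand in every ext-block. No restriction to $H$, no Carlson modules, no explicit test objects are needed.

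The difference in strategy is this. You fix the ext-block first and manufacture a specific object $Z_i\in\CU_i$ (via $L_\zeta\otimes N_i$ induced up) so that $\homb_{kG}(Z_i,M_1)$ can be computed by Frobenius reciprocity. The paper instead lets $X$ be \emph{any} nonprojective object of $\CC_V\cap B_1$ and uses the block-theoretic fact that a unique simple module has nonzero stable $\Hom$ to every nonprojective module in its block. Your route is constructive and would still work in a block with several simples (where the paper's shortcut is unavailable), but in this example it does far more than necessary. In particular, your verification that $e_V\otimes M_1$ lies in $B_1$ via $(e_V)_H\cong e_{V'}\oplus e_{zV'}\oplus e_{z^2V'}$ is correct but, as you note yourself, the centrality of block idempotents already gives this immediately; and the appeal to the proposition on localizing closures is likewise not needed once one simply tests $e_V\otimes M_1$ against an arbitrary $X$ in each ext-block.
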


\begin{proof}
  Suppose that $X$ is a nonprojective module in $\CC_V \cap B_1$.
  Then, because $M_1$ is the unique simple module in $B_1$, we must
  have that $\homb_{kG}(M_1,X)\neq0$ and hence also that $\homb_{kG}(e_V
  \otimes M_1, X) \neq0$. It follows that $e_V \otimes M_1$ must have
  a component in every ext-block of $\CC_V \cap B_1$.
\end{proof}

\section{Lines in general} 
\label{cb} 

In this section, we reduce the study of ext-blocks in $\CC_V$ for an
arbitrary line $V$ to the case studied in Section~\ref{sec-fixed}
using the following theorem of Benson~\cite{ben-catthm}.

\begin{thm} \label{benson-thm}
\cite{ben-catthm} Suppose that $V$ is a line in $V_G(k)$ which is 
minimally supported on an elementary abelian subgroup $E$. Suppose 
that $V'\subseteq V_E(k)$ is a line such that $\res^*_{G,E}(V') = V$.
Let $H$ be the set-wise stabilizer of $V'$ in $N_G(E)$, and let
$\hat{V} = \res^*_{H,E}(V')$. Then the categories $\CC_V$ and 
$\CC_{\hat{V}}$ are equivalent. 
\end{thm}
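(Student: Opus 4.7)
The plan is to exhibit an equivalence $\CC_V \simeq \CC_{\hat V}$ via mutually inverse functors: induction $\Ind_H^G\colon \CC_{\hat V} \to \CC_V$ in one direction, and a projection of restriction $F\colon \CC_V \to \CC_{\hat V}$ in the other. First I would check that $\Ind_H^G$ lands in $\CC_V$: for $N \in \CC_{\hat V}$, the standard formula for the variety of an induced module together with transitivity of restriction gives
\[V_G(\Ind_H^G N) = \res_{G,H}^*(V_H(N)) \subseteq \res_{G,H}^*\res_{H,E}^*(V') = \res_{G,E}^*(V') = V.\]

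Next I would construct $F$. For $M \in \CC_V$, the restriction $M_H$ has variety contained in $W := (\res_{G,H}^*)^{-1}(V) \subseteq \VH$. Since $E \trianglelefteq H$ and $H$ is the $N_G(E)$-stabiliser of $V'$, the further pullback under $\res_{H,E}^*$ identifies the $E$-supported part of $W$ with the $N_G(E)$-orbit of $V'$ in $\VE$; this orbit splits under the $H$-action as $\{V'\}$ together with the distinct translates $\{g\cdot V' : g\in N_G(E)\setminus H\}$. Distinct lines in $\VE$ meet only at the origin, so $W = \hat V \cup W'$ with $\hat V\cap W' = \{0\}$, and the thick subcategory of $\stmodh$ supported on $W$ splits as a direct sum $\CC_{\hat V} \oplus \CC_{W'}$. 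Let $\pi$ denote the projection onto $\CC_{\hat V}$ and define $F(M) := \pi(M_H)$.

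Third I would verify that the two compositions are naturally isomorphic to the identity. For $F\Ind_H^G N$, the Mackey decomposition
\[\res_{G,H}\Ind_H^G N \cong \bigoplus_{g\in H\backslash G/H} \Ind_{H\cap{}^gH}^H\bigl(({}^gN)_{H\cap{}^gH}\bigr)\]
shows that each summand with $g \notin H$ has variety in $W'$ (since ${}^gN$ has $E$-variety equal to the translated line $g\cdot V'$), so is killed by $\pi$, while the trivial double coset contributes $N$. For $\Ind_H^G F(M) \cong M$, I would use that $M$ is an $H$-direct summand of $\Ind_H^G(M_H)$ via the counit of adjunction; splitting $M_H = F(M) \oplus M'$ with $M' \in \CC_{W'}$ and arguing by the same Mackey calculation that $\Ind_H^G(M')$ contributes nothing to the $\hat V$-part of any restriction, $M$ must be pinned inside $\Ind_H^G F(M)$, and a naturality/adjunction check upgrades this inclusion to an isomorphism.

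The principal obstacle I anticipate is the disjoint decomposition $W = \hat V \cup W'$ together with the claim that non-identity Mackey summands land in $\CC_{W'}$. Both use the minimal-support hypothesis for $V$ essentially: without it, distinct $N_G(E)$-translates $g\cdot V'$ could fuse upon further restriction to a smaller elementary abelian subgroup, breaking the set-theoretic disjointness and with it the direct-sum splitting of $\CC_W$ on which the whole construction of $F$ rests.
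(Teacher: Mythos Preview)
Your proposal is correct and follows essentially the same approach as the paper's sketch (which itself defers to Benson's original article for full details): induction $H\to G$ in one direction, restriction followed by picking out the $\hat V$-part in the other, with the Mackey decomposition as the key tool showing that non-identity double cosets contribute summands whose varieties miss $\hat V$. The paper phrases the projection as ``taking the largest direct summand with variety $\hat V$'' or equivalently tensoring with the idempotent module $e_{\hat V}$, which amounts to your $\pi$; you give slightly more detail on the second composite $\Ind_H^G\circ F$ than the paper's sketch does, but the strategy is the same.
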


The equivalence of categories is easy to describe.  The functor
$\xymatrix{\CC_{\hat{V}} \ar[r] & \CC_V}$ is simply induction from $H$
to $G$, and the inverse functor $\xymatrix{\CC_V \ar[r] &
  \CC_{\hat{V}}}$ is restriction to $H$ followed by choosing the
largest direct summand of the restriction that has variety $\hat{V}$.
This last operation is equivalent to taking the tensor product with
$e_{\hat{V}}$, the idempotent module corresponding to $\hat{V}$. The
key point of the proof that these functors are equivalences of
categories is that the conditions guarantee that in the Mackey
decomposition
$$
(M^{\uparrow G})_H \ \cong \ \sum_{HxH} \ x \otimes 
(M_{H \cap xHx^{-1}})^{\uparrow H},
$$
the terms $(M_{H \cap xHx^{-1}})^{\uparrow H}$ for $x\notin H$ have
varieties that intersect $\hat{V}$ trivially, so that if
$M\in\CC_{\hat{V}}$ then the largest direct summand of $(M^{\uparrow
  G})_H$ with variety $\hat{V}$ is stably isomorphic to $M$. See
\cite{ben-catthm} for details.

Benson's equivalence and Theorem~\ref{main-thm} easily imply the
following.

\begin{prop} \label{line-blocks}
Let $V$ be a line in $V_G(k)$ minimally supported on $E$, and let $H$
and $\hat{V}$ be as above. Then the ext-blocks of $\CC_V$ are
parametrized by the ordinary blocks of $kH$.
\end{prop}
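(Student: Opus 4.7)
The plan is to combine Benson's equivalence (Theorem~\ref{benson-thm}) with the main theorem of Section~\ref{sec-fixed} (Theorem~\ref{main-thm}). First, by Theorem~\ref{benson-thm}, there is a triangulated equivalence $\CC_V \simeq \CC_{\hat{V}}$. Since ext-blocks are defined purely in terms of the $\homb$-structure on indecomposable objects, any additive equivalence sends ext-blocks bijectively to ext-blocks. So it is enough to describe the ext-blocks of $\CC_{\hat{V}}$ in terms of the ordinary block structure of $kH$.

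Next, I would verify that the hypotheses of Theorem~\ref{main-thm} are met by the triple $(H, E, V')$. There are three things to check. (a) $E$ is normal in $H$: this is immediate from the definition $H \subseteq N_G(E)$. (b) $V'$ is stable under the action of $H/C_H(E)$: this is also immediate, since $H$ was defined as the setwise stabilizer of $V'$ in $N_G(E)$. (c) $\hat{V}$ is minimally supported on $E$ as a subvariety of $V_H(k)$: here one uses the factorization of restriction maps $\res_{G,E}^* = \res_{G,H}^* \circ \res_{H,E}^*$, which gives $V = \res_{G,H}^*(\hat{V})$. If $\hat{V}$ were contained in $\res_{H,F}^*(V_F(k))$ for some proper elementary abelian $F \subsetneq E$, then applying $\res_{G,H}^*$ would give $V \subseteq \res_{G,F}^*(V_F(k))$, contradicting the fact that $V$ is minimally supported on $E$ in $V_G(k)$.

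With these hypotheses in place, Theorem~\ref{main-thm} tells us that the ext-blocks of $\CC_{\hat{V}}$ coincide with the ordinary blocks of $kH$ (namely, those blocks $B$ of $kH$ for which $B \cap \CC_{\hat{V}}$ contains a nonprojective module). Transporting this description back along the equivalence of Theorem~\ref{benson-thm} gives a parametrization of the ext-blocks of $\CC_V$ by ordinary blocks of $kH$, as desired.

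The main obstacle, such as it is, lies in step (c): one must be careful to invoke the correct factorization of restriction maps on varieties and to use the minimal support hypothesis on $V$ (rather than on $\hat{V}$, which is what we are trying to establish). Once this is sorted out, the remainder of the argument is a formal combination of two previously established results.
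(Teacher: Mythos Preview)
Your proposal is correct and follows exactly the same approach as the paper: combine Benson's equivalence with Theorem~\ref{main-thm}. In fact, the paper's own proof is a two-sentence sketch that simply asserts Theorem~\ref{main-thm} applies to $\CC_{\hat{V}}$, whereas you have taken the extra care to verify its hypotheses (normality of $E$ in $H$, stability of $V'$, and minimal support of $\hat{V}$), so your argument is strictly more detailed than the original.
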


\begin{proof}
By Benson's theorem, there is a natural bijection between ext-blocks
of $\CC_V$ and of $\CC_{\hat{V}}$. But Theorem~\ref{main-thm} applies
to $\CC_{\hat{V}}$, so the ext-blocks are given by the ordinary blocks
of $kH$.
\end{proof}

Of course, there is also a direct sum decomposition of $\CC_V$ given
by the blocks of $kG$, and so each ext-block is contained in an
ordinary block. The way this happens is controlled by Brauer
correspondence. Note that since 
$$C_G(E)=EC_G(E)\leq H\leq N_G(E),$$
for each block $b$ of $kH$ there is a unique block $b^G$ of $kG$, the
Brauer correspondent of $b$, and the blocks of $kG$ that occur in this
way are those with defect groups containing $E$.

\begin{prop}
  In the situation of Proposition~\ref{line-blocks}, every
  non-projective indecomposable module $M$ in the
  ext-block of $\CC_V$ corresponding to a block $b$ of $kH$ is in the
  ordinary block $b^G$ of $kG$. 
\end{prop}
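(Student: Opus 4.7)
First I would transport $M$ to $kH$ via Benson's equivalence (Theorem~\ref{benson-thm}): there is an indecomposable $N\in\CC_{\hat V}$, namely the summand of $M_H$ with variety $\hat V$, such that $M$ is the summand of $N^{\uparrow G}$ with variety $V$. The bijection of Proposition~\ref{line-blocks} between ext-blocks of $\CC_V$ and ordinary blocks of $kH$ is realized by this equivalence together with Theorem~\ref{main-thm}, so $N$ lies in the ordinary block $b$ of $kH$.

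Next I would argue that the vertex of $N$ is $E$. Since $V_H(N)\subseteq\res^*_{H,E}(V_E(k))$, the module $N$ is relatively $E$-projective, so the vertex $Q$ is contained in a conjugate of $E$ in $H$; as $E\trianglelefteq H$, this forces $Q\leq E$. If $Q$ were a proper subgroup of $E$, then $V_H(N)$ would lie in $\res^*_{H,Q}(V_Q(k))$, and hence the line $V'\subseteq V_E(k)$ would lie in $\res^*_{E,Q'}(V_{Q'}(k))$ for some elementary abelian $Q'\leq Q\lneq E$. This is a proper $\bfp$-rational linear subvariety of $V_E(k)$, contradicting the $\bfp$-linear independence of the coordinates of $V'$ established in the proof of Lemma~\ref{lem1}. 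Hence $Q=E$, a defect group of $b$ contains $E$, and because $C_G(E)\leq H\leq N_G(E)$ the Brauer correspondent $b^G$ is a well-defined block of $kG$.

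Finally I would invoke the standard compatibility of the Green/Brauer correspondence with the block correspondence $b\mapsto b^G$: since $N$ has vertex $E$ and lies in $b$, the induced module $N^{\uparrow G}$ has in $\stmodg$ a unique indecomposable summand of vertex $E$, which lies in $b^G$, while all other summands have strictly smaller vertex. Since $V$ is minimally supported on $E$ and $V_G(M)=V$ (using that $M$ is nonprojective and $V$ is a line through the origin), the same linear-independence argument shows that $M$ itself has vertex $E$, so $M$ must be this distinguished summand, and therefore $M\in b^G$. The main obstacle is this last step, because $H$ can be strictly smaller than $N_G(E)$; to handle it one must either factor the induction through $N_G(E)$ (where ordinary Green correspondence applies directly and is known to intertwine the block correspondences) or argue on the level of block idempotents via the Brauer homomorphism $\mathrm{Br}_E$, using that $\mathrm{Br}_E(e_{b^G})$ and $e_b$ have overlapping support in $kC_G(E)$, to conclude that the unique block of $kG$ containing $M$ is indeed $b^G$.
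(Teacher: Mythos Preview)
Your approach differs substantially from the paper's. The paper gives a short argument using Nagao's module-theoretic form of Brauer's Second Main Theorem: if $M$ lies in the block $B$ of $kG$, then since $C_G(E)\leq H$, one has $M_H=M'\oplus M''$, where every indecomposable summand of $M'$ lies in a block of $kH$ with Brauer correspondent $B$, and $M''$ is a direct sum of modules projective relative to subgroups of $H$ not containing $E$. Such subgroups cannot support $\hat V$ (by minimal support on $E$), so the image of $M$ under Benson's equivalence, which is the summand of $M_H$ with variety $\hat V$, must already be a summand of $M'$; hence it lies in a block $b$ with $b^G=B$, which is what was to be shown.

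Your route via vertices and Green correspondence is more elaborate, and the step you yourself flag as an obstacle is a genuine gap, not just a technicality. When $H\lneq N_G(E)$, the claim that $N^{\uparrow G}$ has a \emph{unique} indecomposable summand of vertex $E$ is false in general: already $N^{\uparrow N_G(E)}$ can have several such summands, and each of these contributes its own vertex-$E$ Green correspondent to $N^{\uparrow G}$. Your proposed fix of factoring the induction through $N_G(E)$ therefore requires knowing in which block of $kN_G(E)$ each of these intermediate summands lies, and establishing that is essentially Nagao's theorem again (now applied between $H$ and $N_G(E)$, where $C_{N_G(E)}(E)\leq H$). The Brauer-homomorphism alternative you sketch is likewise a repackaging of the same fact. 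So while your argument can presumably be completed, the missing ingredient is exactly what the paper invokes directly; once Nagao is available, the vertex computations for $M$ and $N$---though correct---become unnecessary.
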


\begin{proof}
  This follows from Nagao's module-theoretic form of Brauer's Second
  Main Theorem, which tells us that if $M$ is in the block $B$ of
  $kG$, then $M_H=M'\oplus M''$, where $M'$ is a direct sum of modules
  in blocks of $kH$ which have $B$ as their Brauer correspondent and
  $M''$ is a direct sum of modules projective relative to subgroups of
  $H$ which do not contain $E$. But then $\hat{V}$ is not contained in
  the variety of $M''$, so the image of $M$ under Benson's equivalence
  must be a summand of $M'$.
\end{proof}

\section{Linckelmann's block varieties} 
\label{linck} 

In this section, we shall consider how the previous results are
related to Linckelmann's notion of block varieties~\cite{L1}.

Let us briefly recall the definition. Let $B$ be a block of $kG$ with
defect group $D$. Choose a maximal $B$-Brauer pair $(D,e_D)$, and for
each $Q\leq D$ let $e_Q$ be the unique block idempotent of $kC_G(Q)$
such that $(Q,e_Q)\leq(D,e_D)$. So in particular $e_{\{1\}}$ is the
block idempotent corresponding to the block $B$.

Let $\CF_{G,B}$ be the fusion system of the block: i.e., the category
whose objects are subgroups of $D$ and where a morphism from $Q$ to
$R$ is a group homomorphism induced by conjugation by some $x\in G$
such that $^x(Q,e_Q)\leq(R,e_R)$. Then Linckelmann defines the block
cohomology $H^*(G,B)$ to be $\varprojlim H^*(Q,k)$, where the inverse
limit is over the category $\CF_{G,B}$. More concretely, $H^*(G,B)$ is
the subring of $H^*(D,k)$ consisting of elements that are stable in a
suitable sense. Then the variety $V_{G,B}$ is the maximal ideal
spectrum of $H^*(G,B)$.

The inclusion $H^*(G,B)\rightarrow H^*(D,k)$, composed with
restriction $H^*(D,k)\rightarrow H^*(Q,k)$ induces a map of varieties
$$r^*_Q:V_Q(k)\rightarrow V_{G,B}$$
for each subgroup $Q\leq D$, and in particular
$$r^*_D:V_D(k)\rightarrow V_{G,B}$$ 
is a finite surjective map.

Also, the image of the restriction map $H^*(G,k)\rightarrow H^*(D,k)$
is contained in $H^*(G,B)$, so there is a natural map of varieties
$$\rho_B:V_{G,B}\rightarrow V_G(k).$$

Linckelmann also defines a subvariety $V_{G,B}(M)$ of $V_{G,B}$, the
{\em block variety}, for every finitely generated module $M$ in the
block $B$ in such a way that $\rho_B$ induces a finite surjective map
$$\rho_B:V_{G,B}(M)\to V_G(M).$$ 
So, as an invariant of the module $M$, $V_{G,B}(M)$ may be regarded as
a refinement of $V_G(M)$.

We shall show that Linckelmann's varieties give another way of
constructing direct sum decompositions of the categories
$\CC_V$. First, we need to generalize some familiar properties of
varieties for modules to this setting.

We shall use the following useful theorem of Linckelmann~\cite[Theorem
2.1]{L3}.

\begin{thm} 
\label{source}
  Let $B$ be a block of $G$ with defect group $D$, and let $i$ be a
  source idempotent of $B$. Then for any finitely generated module $M$
  in the block $B$,
$$V_{G,B}(M)=r^*_D\left(V_D(iM)\right),$$
where $iM$ is considered as a $kD$-module.
\end{thm}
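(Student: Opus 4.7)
The plan is to exploit the Morita equivalence between the block $B$ and the source algebra $A=ikGi$, together with the interior $D$-algebra structure $kD\hookrightarrow A$. Under the Morita equivalence $M\mapsto iM$ the Tate Ext algebras are identified,
$$\widehat{\Ext}^*_B(M,M)\cong\widehat{\Ext}^*_A(iM,iM),$$
and $V_{G,B}(M)$ is by definition the support of the right-hand side as a module over $H^*(G,B)$.

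The next step is to upgrade this to an action of $H^*(D,k)$. The inclusion $kD\hookrightarrow A$ equips $\widehat{\Ext}^*_A(iM,iM)$ with an $H^*(D,k)$-module structure (via the identification $H^*(D,k)\cong\widehat{\Ext}^*_{kD}(k,k)$), and this action is compatible with the original $H^*(G,B)$-action through the inclusion $H^*(G,B)\hookrightarrow H^*(D,k)$ corresponding to $r^*_D$. Since $r^*_D$ is finite, a standard commutative-algebra fact says that the image under $r^*_D$ of the $H^*(D,k)$-support of $\widehat{\Ext}^*_A(iM,iM)$ equals its $H^*(G,B)$-support, which is $V_{G,B}(M)$. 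The theorem therefore reduces to showing that the support of $\widehat{\Ext}^*_A(iM,iM)$ as an $H^*(D,k)$-module is exactly $V_D(iM)$.

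For this I would invoke the basic bimodule structure of the source algebra: as a $(kD,kD)$-bimodule $A$ decomposes as $kD\oplus A'$, where $A'$ is a direct sum of $p$-permutation bimodules associated to pairs $(Q,x)$ with $Q$ a proper subgroup of $D$ and $x\in G$ satisfying $xQx^{-1}\leq D$. The diagonal $kD$-summand contributes $\widehat{\Ext}^*_{kD}(iM,iM)$ as an $H^*(D,k)$-module direct summand of $\widehat{\Ext}^*_A(iM,iM)$, with support exactly $V_D(iM)$. By an Eckmann--Shapiro-type computation, the complementary summands produce Tate Ext groups computable as $\widehat{\Ext}$-groups over the proper subgroups $Q$ (after twisting by $x$), whose $H^*(D,k)$-supports lie in $\res_{D,Q}^*(V_Q(iM_Q))\subseteq V_D(iM)$ by standard restriction theory for support varieties. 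Combining these gives the required equality of supports.

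The main obstacle is the bimodule analysis of the source algebra: one must verify the decomposition $A=kD\oplus A'$ carefully, and perform the Eckmann--Shapiro calculation keeping track of the two-sided $(kD,kD)$-bimodule structure rather than a one-sided induction, as well as the conjugation action of $x\in G$. This is the technical heart of the argument, and is where Puig's theory of source idempotents and interior $D$-algebras plays an essential role; once it is in place, the reduction to Tate cohomology of proper subgroups together with the restriction theorem for support varieties finishes the proof.
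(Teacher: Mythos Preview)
The paper does not prove this statement at all: it is quoted verbatim as Linckelmann's result \cite[Theorem~2.1]{L3} and is used as a black box to deduce Lemma~\ref{2outof3} and Corollary~\ref{thick}. So there is no ``paper's own proof'' to compare your proposal against.

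That said, your outline is broadly the strategy Linckelmann himself uses in \cite{L3}: pass to the source algebra $A=ikGi$ via Morita equivalence, exploit the interior $D$-algebra structure $kD\hookrightarrow A$, and analyse the $(kD,kD)$-bimodule decomposition of $A$ (with $kD$ as a distinguished summand and the remaining summands coming from proper $B$-Brauer pairs) to reduce to ordinary support-variety theory for $D$ and its subgroups. One point to be careful about is your description of the support of $\widehat{\Ext}^*_A(iM,iM)$ over $H^*(D,k)$: the Eckmann--Shapiro step for the complementary bimodule summands is not a one-line restriction-induction identity, since these summands are twisted bimodules of the form $kD\otimes_{kQ}{}^x(kD)$ and one must track both the left and right $kD$-actions together with the conjugation by $x$; Linckelmann handles this by invoking the Quillen stratification machinery he sets up earlier in \cite{L3}. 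Your sketch gestures at this (``keeping track of the two-sided $(kD,kD)$-bimodule structure''), but as written it is an outline rather than a proof.
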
 

\begin{lemma}
\label{2outof3}
Let 
$$0\to M_1\to M_2 \to M_3\to 0$$
be a short exact sequence of modules in the block $B$. Then 
$$V_{G,B}(M_{\alpha})\subseteq V_{G,B}(M_{\beta})\cup V_{G,B}(M_{\gamma})$$
for $\{\alpha,\beta,\gamma\}=\{1,2,3\}$.
\end{lemma}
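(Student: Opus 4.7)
The plan is to reduce the statement to the standard ``two out of three'' property for ordinary support varieties of modules over a $p$-group, using Linckelmann's source-algebra description of the block variety given in Theorem~\ref{source} as the bridge.

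First I would fix a source idempotent $i$ of $B$, so that Theorem~\ref{source} gives
$$V_{G,B}(M_{\alpha})=r_D^*\!\left(V_D(iM_{\alpha})\right)$$
for each $\alpha\in\{1,2,3\}$, where $iM_{\alpha}$ is regarded as a $kD$-module via the embedding $kD\hookrightarrow i(kG)i$ that exists because $i$ is $D$-fixed. Since $i$ is an idempotent of $kG$, the functor $M\mapsto iM$ is exact (it is naturally isomorphic to $\Hom_{kG}(kGi,-)$, and $kGi$ is a projective direct summand of $kG$). Applying this functor to the given short exact sequence therefore produces a short exact sequence
$$0\to iM_1\to iM_2\to iM_3\to 0$$
of $kD$-modules.

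Next I would invoke the familiar two-out-of-three property for ordinary support varieties over the $p$-group $D$, which applied to the sequence above gives
$$V_D(iM_{\alpha})\subseteq V_D(iM_{\beta})\cup V_D(iM_{\gamma})$$
for any permutation $\{\alpha,\beta,\gamma\}=\{1,2,3\}$. Since the set-theoretic map $r_D^*$ respects inclusions and unions, applying it to the above containment and then using Theorem~\ref{source} three times translates it immediately into
$$V_{G,B}(M_{\alpha})\subseteq V_{G,B}(M_{\beta})\cup V_{G,B}(M_{\gamma}),$$
which is exactly what we want.

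I do not anticipate any genuine obstacle here: the whole argument is a transparent translation through the source-algebra dictionary provided by Theorem~\ref{source}. The only things one needs to check carefully are the exactness of $M\mapsto iM$ as a functor valued in $kD$-modules (standard, since $i$ is an idempotent and $i$ commutes with $D$) and that Theorem~\ref{source} applies to each $M_{\alpha}$ individually, which it does because all three modules lie in $B$ by hypothesis.
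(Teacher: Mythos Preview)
Your proof is correct and follows essentially the same route as the paper's own proof: multiply the short exact sequence by the source idempotent $i$, invoke the standard two-out-of-three property for $V_D$, and push forward along $r_D^*$ using Theorem~\ref{source}. The only difference is that you spell out the exactness of $M\mapsto iM$ a bit more carefully than the paper does.
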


\begin{proof}
  Using Theorem~\ref{source} this follows easily from the well-known
  corresponding statement for cohomological varieties. Multiplying the short exact sequence by the source idempotent $i$, we get a short exact sequence
$$0\to iM_1\to iM_2\to iM_3\to 0,$$
and so
\begin{align*}
V_{G,B}(M_{\alpha})&=r^*_D\left(V_D(iM_{\alpha})\right)\\
&\subseteq r^*_D\left(V_D(iM_{\beta})\cup V_D(iM_{\gamma})\right)\\
&= r^*_D\left(V_D(iM_{\beta})\right)\cup 
r^*_D\left(V_D(iM_{\gamma})\right)\\
&=V_{G,B}(M_{\beta})\cup V_{G,B}(M_{\gamma}).
\end{align*}
\end{proof}

The following follows immediately for the stable module category.
 
\begin{cor}
\label{thick}
  Let $W$ be a closed homogeneous subvariety of $V_{G,B}$. Then the
  finitely-generated modules $M$ in the block $B$ for which
  $V_{G,B}(M)\subseteq W$ form a thick subcategory of $\stmodg$.
\end{cor}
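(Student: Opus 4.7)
My plan is to verify the three defining properties of a thick triangulated subcategory of $\stmodg$: closure under direct summands, closure under the shift $\Omega^{\pm1}$, and the two-out-of-three property for distinguished triangles. The starting observation is that every projective module $P$ in $B$ lies in the subcategory, i.e., $V_{G,B}(P) \subseteq W$. Indeed, by Theorem~\ref{source}, $V_{G,B}(P) = r^*_D(V_D(iP))$, and $iP$ is a $kD$-summand of $(iB)^n$ for some $n$. A standard property of source idempotents in Puig's theory is that $iB$ is finitely generated projective as a $kD$-module, so $iP$ is $kD$-projective and $V_D(iP) = \{0\}$. Since $W$, being a closed homogeneous subvariety, contains the origin, $V_{G,B}(P) = \{0\} \subseteq W$.

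Closure under direct summands is handled similarly: if $M = M_1 \oplus M_2$ lies in the subcategory, then each $iM_j$ is a $kD$-summand of $iM$, so by the usual behaviour of cohomological support varieties under taking summands one has $V_D(iM_j) \subseteq V_D(iM)$; applying $r^*_D$ gives $V_{G,B}(M_j) \subseteq V_{G,B}(M) \subseteq W$. For the triangulated part, I would use that every distinguished triangle in $\stmodg$ is isomorphic to one coming from a short exact sequence $0 \to M_1 \to M_2 \to M_3 \to 0$ in $\modg$, and by projecting onto the block $B$ we may arrange for all three terms to lie in $B$. Lemma~\ref{2outof3} then gives the two-out-of-three property directly. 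Closure under $\Omega$ is then deduced by applying this property to the triangle arising from $0 \to \Omega M \to P \to M \to 0$, with $P$ a projective cover of $M$ in $B$ (which is in the subcategory by the preliminary observation); closure under $\Omega^{-1}$ is dual, using an injective envelope.

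The only step requiring more than routine verification is the initial claim that projectives in $B$ have trivial block variety, which relies on the external-algebra structure of $iB$ as a $kD$-module. Once this is granted, the rest of the argument reduces formally to Lemma~\ref{2outof3} together with the standard fact that distinguished triangles in $\stmodg$ arise from short exact sequences of modules.
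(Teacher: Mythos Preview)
Your argument is correct and follows the same route the paper intends: the paper simply records that the corollary ``follows immediately for the stable module category'' from Lemma~\ref{2outof3}, and what you have written is a careful unpacking of exactly that deduction. Your explicit verification that projectives in $B$ have trivial block variety (via Theorem~\ref{source} and projectivity of $iB$ over $kD$) and your handling of summands are the details the paper leaves implicit, but there is no genuine difference in strategy.
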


We shall denote this thick subcategory by $\CC_{W,B}$.

\begin{prop} 
  Let $M$ and $N$ be finitely generated modules in a block $B$ of a
  finite group $G$. If $V_{G,B}(M)\cap V_{G,B}(N)=\{0\}$, then
  $\homb(M,N)=0$.
\end{prop}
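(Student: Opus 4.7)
The plan is to reduce the statement to the classical tensor product theorem for support varieties by passing to a source idempotent. Choose a source idempotent $i$ of $B$, and let $A=ikGi$ be the source algebra, Morita equivalent to $B$, with $M$ and $N$ corresponding to the $A$-modules $iM$ and $iN$ (and, via the inclusion $kD\hookrightarrow A$, to $kD$-modules). By Theorem~\ref{source} we then have $V_{G,B}(M)=r_D^*(V_D(iM))$ and $V_{G,B}(N)=r_D^*(V_D(iN))$, so the problem becomes one about $V_D(iM)$ and $V_D(iN)$ inside $V_D(k)$.

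The map $r_D^*:V_D(k)\to V_{G,B}$ is induced by a finite inclusion of graded-commutative Noetherian rings $H^*(G,B)\hookrightarrow H^*(D,k)$. A standard integral-dependence argument (every positive-degree element of $H^*(D,k)$ satisfies an integral equation whose lower coefficients lie in the positive-degree part of $H^*(G,B)$) shows that the only preimage of the cone point is the cone point itself. Thus the hypothesis $V_{G,B}(M)\cap V_{G,B}(N)=\{0\}$ forces $V_D(iM)\cap V_D(iN)=\{0\}$. The classical tensor product theorem for the $p$-group $D$ now gives $V_D(iM^*\otimes_k iN)=V_D(iM)\cap V_D(iN)=\{0\}$, so $iM^*\otimes_k iN$ is projective as a $kD$-module, and hence $\homb_{kD}(iM,iN)=0$.

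The remaining task is to promote this vanishing from $kD$ to $A$, since by the Morita equivalence $\homb_{kG}(M,N)\cong\homb_A(iM,iN)$. This is the main obstacle. The input needed is a standard property of source algebras (essentially due to Puig): a morphism between $A$-modules factors through an $A$-projective if and only if, viewed as a morphism of $kD$-modules via restriction, it factors through a $kD$-projective. This in turn rests on $A$ being a $(kD,kD)$-permutation bimodule with trivial source, which makes $kD$-projectivity and $A$-projectivity compatible. Granted this, $\homb_A(iM,iN)=0$ follows from $\homb_{kD}(iM,iN)=0$, yielding $\homb_{kG}(M,N)=0$. Everything else in the argument is a direct translation through Theorem~\ref{source} and the classical tensor product theorem for $p$-groups; it is this last descent step that carries the nontrivial content.
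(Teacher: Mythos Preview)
Your argument is correct but follows a quite different path from the paper's. The paper works entirely inside the triangulated category: it completes $\phi:M\to N$ to a triangle with third vertex $L$, uses Lemma~\ref{2outof3} to get $V_{G,B}(L)=V_{G,B}(M)\cup V_{G,B}(N)$, invokes the Benson--Linckelmann connectedness theorem to split $L=L_M\oplus L_N$ along this decomposition, and then applies the octahedral axiom to produce an object $X$ with $V_{G,B}(X)\subseteq V_{G,B}(M)\cap V_{G,B}(N)=\{0\}$ through which $\phi$ factors. Your route is instead algebraic: you pass to the source algebra, use Theorem~\ref{source} together with the finiteness of $H^*(G,B)\hookrightarrow H^*(D,k)$ to push the disjointness down to $V_D(iM)\cap V_D(iN)=\{0\}$, apply the classical tensor product theorem over $kD$, and then lift the vanishing of stable $\Hom$ from $kD$ to $A$ via the separability property $A\mid A\otimes_{kD}A$ of source algebras. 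The paper's approach has the advantage of staying within the framework already developed in that section and using only the cited connectedness result; yours avoids the octahedral gymnastics and the connectedness theorem entirely, but imports a structural fact about source algebras (which, as you note, is where the real content lies). Both are valid and illuminate different aspects of why the statement holds.
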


\begin{proof}
  Suppose $\phi:M\to N$ is a homomorphism between modules whose
  varieties intersect trivially. We can complete this map to a
  triangle
$$M\to N\to L\to \Omega(M)$$
in $\stmodg$. By Corollary~\ref{thick},
$$V_{G,B}(L)=V_{G,B}(M)\cup V_{G,B}(N).$$
 
In~\cite[Corollary 1.2]{BL}, Benson and Linckelmann prove that the
block variety of an indecomposable module is connected, and
by~\cite[Corollary 2.2]{L3}, the block variety of a direct sum of two
modules is the union of their individual block varieties. It follows
that $L$ is the direct sum of two modules $L_M$ and $L_N$, with
$V_{G,B}(L_M)=V_{G,B}(M)$ and $V_{G,B}(L_N)=V_{G,B}(N)$.

The octahedral axiom gives a commutative diagram
$$\xymatrix{
\Omega(L)\ar[r]\ar[d]&M\ar[r]\ar[d]&N\ar @{=}[d]\\
\Omega(L_N)\ar[r]\ar[d]&X\ar[r]\ar[d]&N\\
L_M\ar @{=}[r]&L_M
}$$
where, by Corollary~\ref{thick}, 
$$V_{G,B}(X)\subseteq V_{G,B}(M)\cap V_{G,B}(N)=\{0\},$$
and so $X$ is projective. But the map $\phi$ factors through $X$.
\end{proof}

The next corollary follows immediately.

\begin{cor}
  If $W=\cup_{i\in I}W_i$ is the union of finitely many closed
  subvarieties $W_i$, where $W_i\cap W_j=\{0\}$ for $i\neq j$, then
  $\CC_{W,B}$ has a direct sum decomposition
$$\CC_{W,B}=\bigoplus_{i\in I}\CC_{W_i,B}.$$
\end{cor}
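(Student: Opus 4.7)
The plan is to verify the two defining properties of a direct sum decomposition for $\CC_{W,B}$: the pairwise Hom-orthogonality of the summands $\CC_{W_i,B}$, and the existence of a coproduct decomposition indexed by $I$ for every object.

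The Hom-orthogonality is immediate from the preceding Proposition. If $M \in \CC_{W_i,B}$ and $N \in \CC_{W_j,B}$ with $i\neq j$, then $V_{G,B}(M)\cap V_{G,B}(N)\subseteq W_i\cap W_j = \{0\}$, whence $\homb(M,N)=0$, and by symmetry $\homb(N,M)=0$. Since $I$ is finite and $\homb$ is additive in each variable, this extends to the case where one argument is a finite coproduct of objects drawn from the other $\CC_{W_j,B}$, which is the form required by the definition of a direct sum decomposition.

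For the decomposition itself, I would take an arbitrary $M\in\CC_{W,B}$ and apply Krull--Schmidt to write $M=\bigoplus_k L_k$ as a finite direct sum of indecomposable modules in the block $B$. The Benson--Linckelmann connectedness theorem cited in the previous proof tells us that each $V_{G,B}(L_k)$ is connected, and since the $W_i$ meet pairwise only at the origin, any connected homogeneous closed subvariety of $W=\bigcup_i W_i$ must be contained in a single $W_{i(k)}$. Thus each $L_k$ lies in $\CC_{W_{i(k)},B}$, and grouping the summands according to the value of $i(k)$ produces a decomposition $M=\bigoplus_{i\in I}M_i$ with each $M_i\in\CC_{W_i,B}$, where Linckelmann's additivity of block varieties on direct sums certifies that $V_{G,B}(M_i)\subseteq W_i$.

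The only delicate point is the interpretation of connectedness in the Benson--Linckelmann statement. A block variety is always a homogeneous closed subvariety and therefore contains the origin, so a literal reading of ``connected'' would be vacuous. The intended meaning, and the one that makes the argument go through, is projective connectedness: equivalently, that the variety is not a nontrivial union of two homogeneous subvarieties meeting only at $\{0\}$. Granting this, the argument is essentially mechanical and no real obstacle remains.
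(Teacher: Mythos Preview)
Your proof is correct and matches the paper's approach: the paper simply declares the corollary to follow immediately, and you have supplied precisely the details implicit in that claim---Hom-orthogonality from the preceding Proposition, and the object decomposition via Krull--Schmidt together with the Benson--Linckelmann connectedness and additivity results already invoked in the Proposition's proof. Your remark about projective connectedness is also apt and consistent with how the paper uses the result.
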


Now let us return to the example of $G=(C_7\times C_2^2)\rtimes C_3$
studied in Section~\ref{examples}. Recall that in that example, there
was a line $V$ in $V_G(k)$, and two blocks $B_1$ and $B_2$ for which
the intersection of $\CC_V$ with each block decomposed as the direct
sum of three ext-blocks.

Considering Linckelmann's block varieties sheds new light on this. Let
$B$ be either of the two blocks, and recall that there is a natural
map of varieties $\rho_B:V_{G,B}\to V_G(k)$. In this example, one can
calculate that $\rho_B^{-1}(V)$ is the union of three lines
$W_1,W_2,W_3$ in $V_{G,B}$, and the intersection of $\CC_V$ with the
block $B$ is the direct sum
$\CC_{W_1,B}\oplus\CC_{W_2,B}\oplus\CC_{W_3,B}$ of thick subcategories
determined by block varieties.

Similar observations apply in all other examples we have calculated,
and it is natural to ask whether it is true in general, given a block
$B$ of a finite group $G$ and a line $V$ in the image of $\rho_B$,
that the ext-blocks of the intersection of $\CC_V$ with the block $B$
are precisely the categories $\CC_{W,B}$, for $W$ an irreducible
component of $\rho_B^{-1}(V)$. If this were the case, then it would
follow by a fairly straightforward argument that for any closed
homogeneous subvariety $V$ of $V_G(k)$, the ext-blocks of the
intersection of $\CC_V$ with the block $B$ are just the categories
$\CC_{W,B}$ for $W$ a connected component of $\rho_B^{-1}(V)$.


\begin{thebibliography}{BeCaRo}

\bibitem[AE]{alp-evens} J.~Alperin and L.~Evens, 
  {\em Varieties and elementary abelian groups}, 
J. Pure Appl. Algebra {\bf 26} (1982), 221--227.
\bibitem[B1]{Bbook} D.~J.~Benson,
        ``Representations and Cohomology I, II'',
        Cambridge Univ. Press, 1991.
\bibitem[B2]{ben-catthm} D. Benson, {\em Cohomology of modules in the 
principal block of a finite group}, New York J. Math. {\bf 1}(1995),
196--205.
\bibitem[BCR]{BCR3} D. Benson, J. Carlson and J. Rickard, {\em Thick
    subcategories of the stable module category}, Fund. Mathematicae
  {\bf 153} (1997), 59--80.
\bibitem[BL]{BL} D. Benson and M. Linckelmann, {\em Vertex and source 
determine the block variety of an indecomposable module}, 
J. Pure Appl. Algebra {\bf 197} (2005), 11--17.
\bibitem[C1]{car-sec6} J. Carlson, {\em The varieties and the cohomology ring 
of a a module} J. Algebra {\bf 85} (1983), 104--143. 
\bibitem[C2]{car-connected} J. Carlson, {\em The variety of an indecomposable
module is connected}, Invent. Math. {\bf 77} (1984),291--299.
\bibitem[L1]{L1} M. Linckelmann, {\em Varieties in block theory} J. Algebra {\bf 215} (1999), 460--480.
\bibitem[L3]{L3} M. Linckelmann, {\em Quillen stratification for block
    varieties}, J. Pure Appl. Algebra {\bf 172} (2002), 257--270. 

\end{thebibliography}
\end{document}